\def\d{\mathrm{d}}
\def\laweq{\buildrel \d \over =}
\newcommand{\var}{\mathrm{Var}}
\newcommand{\cov}{\mathrm{Cov}}
\newcommand{\E}{\mathbb{E}}
\newcommand{\R}{\mathbb{R}}
\newcommand{\N}{\mathbb{N}}
\newcommand{\p}{\mathbb{P}}
\newcommand{\id}{\mathds{1}}
\newcommand{\XX}{\mathbf X}
\renewcommand{\(}{\left(}
\renewcommand{\)}{\right)}
\renewcommand{\ge}{\geqslant}
\renewcommand{\le}{\leqslant}
\renewcommand{\geq}{\geqslant}
\renewcommand{\leq}{\leqslant}
\renewcommand{\epsilon}{\varepsilon}
\newcommand{\PreserveBackslash}[1]{\let\temp=\\#1\let\\=\temp}
\newcolumntype{C}[1]{>{\PreserveBackslash\centering}p{#1}}
\newcolumntype{R}[1]{>{\PreserveBackslash\raggedleft}p{#1}}
\newcolumntype{L}[1]{>{\PreserveBackslash\raggedright}p{#1}}
\def\ttabular{%
\hbox\bgroup
\let\\\cr
\def\rulea{\ifnum\rowc=\@ne \hrule height 1.3pt \fi}
\def\ruleb{
\ifnum\rowc=1\hrule height 1.3pt \else
\ifnum\rowc=6\hrule height \heavyrulewidth
   \else \hrule height \lightrulewidth\fi\fi}
\valign\bgroup
\global\rowc\@ne
\rulea
\hbox to 10em{\strut \hfill##\hfill}%
\ruleb
&&%
\global\advance\rowc\@ne
\hbox to 10em{\strut\hfill##\hfill}%
\ruleb
\cr}
\def\endttabular{%
\crcr\egroup\egroup}
\theoremstyle{plain}
\newtheorem{theorem}{Theorem}
\newtheorem{lemma}{Lemma}
\newtheorem{proposition}{Proposition}
\theoremstyle{definition}
\newtheorem{definition}{Definition}
\newtheorem{example}{Example}
\newtheorem{remark}{Remark}
\DeclareRobustCommand{\bsquare}{%
  \mathop{\vphantom{\sum}\mathpalette\bigstar@\relax}\slimits@
}
\newcommand{\bigstar@}[2]{%
  \vcenter{%
    \sbox\z@{$#1\sum$}%
    \hbox{\resizebox{.9\dimexpr\ht\z@+\dp\z@}{!}{$\m@th\dsquare$}}%
  }%
}
\newcommand{\dsquare}{\mathop{  \square} \displaylimits}
\begin{document}

\title{Joint mixability and notions of negative dependence}

\author{Takaaki Koike\thanks{
Graduate School of Economics, Hitotsubashi University, Japan.
\Letter~{\texttt{takaaki.koike@r.hit-u.ac.jp}}
} \and Liyuan Lin\thanks{Department of Statistics and Actuarial Science, University of Waterloo, Canada. \Letter~{\texttt{l89lin@uwaterloo.ca}}
} \and  Ruodu Wang\thanks{Department of Statistics and Actuarial Science, University of Waterloo, Canada.
\Letter~{\texttt{wang@uwaterloo.ca}}
}
}
\maketitle

\begin{abstract}
A joint mix is a random vector with a constant component-wise sum.
The dependence structure of a joint mix minimizes some common objectives such as the variance of the component-wise sum, and it is regarded as a concept of extremal negative dependence.
In this paper, we explore the connection between the joint mix structure and popular notions of negative dependence in statistics, such as negative correlation dependence, negative orthant dependence and negative association.
A joint mix is not always negatively dependent in any of the above senses, but some natural classes of joint mixes are.
We derive various necessary and sufficient conditions for a joint mix to be negatively dependent, and study the compatibility of these notions.
For identical marginal distributions, we show that a negatively dependent joint mix solves a multi-marginal optimal transport problem for quadratic cost under a novel setting of uncertainty. 
Analysis of this optimal transport problem with heterogeneous marginals reveals a trade-off between negative dependence and the joint mix structure.
\medskip
\\
\noindent   \emph{Keywords:}
Joint mixability; negative dependence; optimal transport; extreme dependence; uncertainty
\\
\emph{MSC 2000}: Primary: 62H05; Secondary: 	49Q22, 64H20, 91B05
\end{abstract}

\section{Introduction}\label{sec:intro}

Dependence among multiple sources of randomness has always been an active topic in operations research,  statistics,   transport theory, economics, and finance; see \cite{DDGK05}, \cite{J14}, \cite{R13}, \cite{MFE15} and \cite{G16}  for standard textbook treatment in different fields, and the recent work \cite{BLLW20} for   relevant examples in operations research.
In contrast to positive, which received much attention in the literature, considerably fewer studies are found on negative dependence, partially due to its more complicated mathematical nature. 
For a review and historical account on
extremal positive and negative dependence concepts, we refer to \cite{PW15}. 

In the past decade,  the notion of joint mixability proposed by \cite{WPY13}, which generalizes complete mixability~\citep{WW11}, has been shown useful for solving many optimization problems involving the dependence of multiple risks.
In particular, joint mixability is essential to worst-case bounds on Value-at-Risk and other risk measures under dependence uncertainty \citep{PR13, EPR13,
 BJW14}, as well as bottleneck assignment and scheduling problems \citep{CY84,H84, H15, BBV18}.

Joint mixability concerns, for given marginal distributions, the existence of a  random vector which has a constant component-wise sum. Such a random vector is called a \emph{joint mix} supported by the given marginal distributions, and it represents a very simple concept of dependence.
A joint mix is commonly regarded as a notion of extremal negative dependence; see the review of \cite{PW15}.
The reason why a joint mix represents  negative dependence   is that it minimizes many objectives which are maximized by comonotonicity. For instance, for fixed marginal distributions of the risks, comonotonicity maximizes the variance, the stop-loss premium, and the Expected Shortfall (ES) of the sum of the risks,
whereas a joint mix, if it exists, minimizes these quantities; see e.g., \cite{R13}. As such, joint mixability is seen as the safest dependence structure, as long as risk aggregation is concerned~\citep{EPRWB14}.

Although a joint mix has been treated as a concept of negative dependence, it remains unclear whether it is consistent with   classic notions of negative dependence in statistics.
Popular notions of negative dependence include 
 negative correlation dependence (NCD), 
 negative orthant dependence (NOD; \citealp{BSS82,L66}) and negative association (NA; \citealp{AS81, JP83}).
The connection between joint mixes and these negative dependence concepts is the main object that we address in this paper.
We obtain some necessary and sufficient conditions for a joint mix to be NOD or NA in Section \ref{sec:JM}.
Some characterization results are obtained in Section~\ref{sec:elliptical} for the class of elliptical distributions.
In particular, among all elliptical classes, only the Gaussian family supports NOD joint mixes of any dimension. 


 Since a joint mix may be either  negatively dependent or not, a natural question is whether there are special features of negatively dependent joint mixes which are useful in applications. 
For this question, we consider a multi-marginal optimal transport problem under uncertainty on the set of components.  A few optimality results on negatively dependent joint mixes are obtained, and they demonstrate an interesting interplay between joint mixes and negative dependence.  
In particular, for the special case of quadratic cost,  we show that the optimizer has to be an NCD JM in some settings.
This is the topic of Section \ref{sec:otp}.


The study of joint mixability was originally motivated by questions in risk management and operations research, and it has a strong connection to the theory of multi-marginal optimal transport~\citep{S15, P15} and variance reduction in random sampling~\citep{CM01,CM05}; see also our Section \ref{sec:otp}.
Recently, there is a growing spectrum of applications of joint mixability outside the above fields, including multiple hypothesis testing~\citep{VWW22}, wireless communications~\citep{BJ20}, labor market matching~\citep{BTZ21}, and resource allocation games~\citep{PRG22}.
Results in this paper connect the two topics of joint mixability and negative dependence, allowing us to bring tools from one area to the other.


This paper is organized as follows.
Section~\ref{sec:notions:negative:dependence} introduces the concepts of negative dependence and joint mixability, and summarizes their basic interrelationships.
Section~\ref{sec:JM} explores conditions for joint mixes to be negatively dependent. Two  results on  decompositions of joint mixes into negatively dependent ones are also obtained.
Section~\ref{sec:otp} studies a multi-marginal optimal transport problem 
 as an application of negatively dependent joint mixes.
Section~\ref{sec:elliptical}  studies joint mixes within the elliptical family, and we obtain a new characterization of the Gaussian family as the only one supporting a negatively dependent elliptical distribution for every dimension. Section~\ref{sec:conclusion} concludes the paper with  some open questions and potential directions for future research.
All the proofs are deferred to Appendix~\ref{sec:proofs}.


\section{Notions of negative dependence}\label{sec:notions:negative:dependence}

In this section we recall a few classic notions of negative dependence.  Throughout, denote by $[n]=\{1,\dots,n\}$ and $\mathbf 1_n$ the $n$-vector with all components being $1$; the vector $\mathbf 0_n$ is defined analogously.
All inequalities and equalities between (random) vectors are component-wise.
For an $n$-dimensional random vector $\mathbf{X}=(X_1,\dots,X_n)$, denote by ${\mathbf X}^\perp=(X_1^\perp,\dots,X_n^\perp)$ a random vector with independent components such that $X_i \laweq X_i^\perp$, $i\in [n]$, where $\laweq$ stands for equality in distribution. 
For a set $A\subseteq [n]$, we denote by
 $\mathbf X_A=(X_k)_{k\in A}$.
 A function $\psi:\R^n \rightarrow \R$ is called \emph{supermodular} if $\psi(\mathbf x \wedge \mathbf y)+\psi(\mathbf x \vee \mathbf y)\geq \psi(\mathbf x)+\psi(\mathbf y)$ for all $\mathbf x,\mathbf y\in \R^n$, where $\mathbf x \wedge \mathbf y$ and  $\mathbf x \vee \mathbf y$ are the component-wise minimum and maximum of $\mathbf x$ and $\mathbf y$, respectively.

\begin{definition}
Let $\mathbf X=(X_1,\dots,X_n)$  be an $n$-dimensional random vector.
\begin{enumerate}[(i)]
    \item $\mathbf X $  is   \emph{negative correlation dependent (NCD)} if  $
\cov(X_i,X_j)\le 0
$
for all $i,j\in [n]$ with $i\ne j$.
    \item  $\mathbf X $ is \emph{negative upper orthant dependent (NUOD)} if $\p(\mathbf{X}  >  \mathbf {t} )\le \p(\mathbf{X}^\perp  >  \mathbf t)$ for all $\mathbf t \in \R^n$; $\mathbf X$ is \emph{negative lower orthant dependent (NLOD)} if  $\p(\mathbf{X}  \le  \mathbf {t} )\le \p(\mathbf{X}^\perp  \le  \mathbf t)$ for all $\mathbf t \in \R^n$.
If $\mathbf X$ is both NLOD and NUOD, then it is \emph{negative orthant dependent (NOD)}.
    \item $\mathbf X$ is \emph{negative supermodular dependent (NSD)} if $\mathbb E[\psi(\mathbf X)]\leq \mathbb E[\psi(\mathbf X^\perp)]$ for all supermodular functions $\psi:\R^n \rightarrow \R$ such that the expectations exist. 
    \item  $\mathbf X$  is   \emph{negatively associated (NA)} if  
\begin{align}\label{eq:def-na}
\text{Cov}(f(\mathbf X_A),g(\mathbf X_B)) \leq 0,
\end{align} 
for any disjoint subsets $A,B \subseteq [n]$ and any real-valued, coordinate-wise increasing functions $f$ and $g$ 
such that $f(\mathbf X_A)$ and $g(\mathbf X_B)$ have finite second moments.
\item $\mathbf X$ is \emph{counter-monotonic (CT)}
if each pair of its component $(X_i,X_j)$ for $i\ne j$
satisfies $(X_i,X_j)=(f(Z),-g(Z))$ almost surely (a.s.) for some random variable $Z$ and increasing functions $f,g$.
\item $\mathbf X$ is a \emph{joint mix} (abbreviated as ``$\mathbf X$ is JM") if $\sum_{i=1}^n X_i=c $ a.s.~for some constant $c\in \R$.  
\end{enumerate}
\end{definition}

All abbreviations introduced in this section are also used as nouns to represent the corresponding dependence concept. 
The next definition concerns properties of the marginal distributions that allow for JM random vectors.
\begin{definition}
An $n$-tuple $(F_1,\dots,F_n)$ of  distributions on $\R$ is
called \emph{jointly mixable} if there exists a  joint mix $\mathbf X=(X_1,\dots,X_n)$ such that $X_i \sim F_i$, $i \in [n]$. The constant $c=\sum_{i=1}^n X_i$ is called a \emph{center} of $\mathbf X$.
In this case, we also say that \emph{$(F_1,\dots,F_n)$ supports a joint mix $\mathbf{X}$.}
A distribution $F$ is called $n$-\emph{completely mixable} if the $n$-tuple $(F,\dots,F)$ is jointly mixable. 
\end{definition}

The following implications hold between the above concepts of negative dependence. These implications are either checked directly by definition or shown in the literature, e.g., \cite{JP83}, \cite{CV04} and \cite{LLW23}. 
The case of   $n\ge 3$ is different from the case $n=2$.
\begin{align}\label{eq:chain-2}&  n=2 :~~~~ \mbox{JM} \Longrightarrow  \mbox{CT}   \Longrightarrow \mbox{NA}    \Longleftrightarrow \mbox{NSD}   \Longleftrightarrow \mbox{NOD} \Longleftrightarrow \mbox{NUOD} \Longleftrightarrow \mbox{NLOD} \Longrightarrow \mbox{NCD}; \\
\label{eq:chain} 
&\mbox{general $n$}:~~~~~~~~   \mbox{CT}   \Longrightarrow \mbox{NA}    \Longrightarrow \mbox{NSD}   \Longrightarrow \mbox{NOD} \Longrightarrow \mbox{NUOD or NLOD}\Longrightarrow \mbox{NCD} .
\end{align}
All one-direction implications  in \eqref{eq:chain-2} and \eqref{eq:chain} are strict for $n \ge 3$; see \cite{ASB13} for some examples.
In contrast to the case $n=2$ in \eqref{eq:chain-2}, JM no longer implies any of the properties in \eqref{eq:chain}. This can be observed by the following properties of Gaussian random vectors.

\begin{proposition}\label{prop:r1-G-1}
Let $\mathbf X\sim \operatorname{N}_n(\boldsymbol{\mu},\Sigma)$ be a Gaussian random vector with mean vector $\boldsymbol{\mu}\in \mathbb{R}^n$ and covariance matrix $\Sigma=(\sigma_{ij})_{n\times n}$.
\begin{enumerate}[(a)]
\item   The followings are equivalent:
(i) $\mathbf X$ is NA; (ii) $\mathbf X$ is NSD;  (iii) $\mathbf X$ is NUOD;    (iv) $\mathbf X$ is NLOD;
(v) $\mathbf X$ is NCD.
\item    The followings are equivalent:
(i) $\mathbf X$ is  JM; (ii) $\mathbf 1_n^\top \Sigma \mathbf 1_n=0$.
\item For $n=2$,  the followings are equivalent:
(i) $\mathbf X$ is JM; (ii) $\mathbf X$ is CT and $\sigma_{11}=-\sigma_{12}$.
\item For $n\ge 3$,   $\mathbf X$ is never CT unless   at least $n-2$ components of $\mathbf X$ are degenerate.
\end{enumerate}
\end{proposition}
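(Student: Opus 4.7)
The plan is to dispatch the four parts essentially independently, using the implication chains \eqref{eq:chain-2} and \eqref{eq:chain} for the ``easy'' directions and exploiting the explicit Gaussian covariance structure for the converses.

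For part (a), the forward chain $\mbox{NA}\Rightarrow\mbox{NSD}\Rightarrow\mbox{NOD}\Rightarrow\mbox{NUOD},\,\mbox{NLOD}\Rightarrow\mbox{NCD}$ is already in \eqref{eq:chain} and holds without any distributional assumption. The only substantive content is the Gaussian converse $\mbox{NCD}\Rightarrow\mbox{NA}$, which is the classical theorem of Joag-Dev and Proschan (1983) asserting that a multivariate normal vector is NA if and only if all its off-diagonal covariances are non-positive; I would appeal to that result directly rather than reproducing its proof. For part (b), $S_n=\sum_{i=1}^n X_i$ is univariate Gaussian with variance $\mathbf 1_n^\top \Sigma\mathbf 1_n$, and a Gaussian is a.s.\ constant if and only if its variance vanishes, which gives the equivalence in one line.

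For part (c), the direction $\mbox{JM}\Rightarrow\mbox{CT}$ is in \eqref{eq:chain-2}; the JM condition then yields $\var(X_1+X_2)=\sigma_{11}+2\sigma_{12}+\sigma_{22}=0$, and CT forces correlation $-1$, i.e.\ $\sigma_{12}^2=\sigma_{11}\sigma_{22}$, so a short algebraic manipulation extracts $\sigma_{11}=-\sigma_{12}$ (and in fact $\sigma_{11}=\sigma_{22}$). For the converse, a bivariate Gaussian CT pair must satisfy $X_2=a-bX_1$ a.s.\ with $b>0$; the hypothesis $\sigma_{12}=-\sigma_{11}$ pins down $b=1$, so $X_1+X_2=a$ a.s.

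For part (d), I would argue by contradiction: suppose at least three components, say $X_1,X_2,X_3$, are non-degenerate. CT applied to each Gaussian pair forces an almost sure affine decreasing relation, hence $\rho(X_i,X_j)=-1$ for each of the three pairs. But $\rho(X_1,X_2)=-1$ and $\rho(X_1,X_3)=-1$ make $X_2$ and $X_3$ both a.s.\ decreasing affine functions of $X_1$, forcing $\rho(X_2,X_3)=+1$, a contradiction. Degenerate components contribute trivially and impose no constraint, so the conclusion is that at most two components may be non-degenerate. None of the four parts presents a genuine technical obstacle once one notices that for Gaussians the CT property collapses to a perfect negative linear dependence and the JM property collapses to vanishing variance of the sum; the only nontrivial external input is the Joag-Dev--Proschan characterization invoked in part (a).
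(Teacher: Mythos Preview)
Your argument is correct throughout and matches the paper's proof for parts (a)--(c): the paper likewise invokes the chain \eqref{eq:chain} for the forward implications and cites \cite{JP83} for (v)$\Rightarrow$(i), and dismisses (b) and (c) as ``easily checked by definition,'' which is exactly what your variance and affine-dependence computations unpack.

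The only genuine difference is part (d). The paper appeals to the general fact (from \cite{D72} and \cite{PW15}) that a CT random vector in dimension $\ge 3$ cannot have continuous marginal distributions; since non-degenerate Gaussian marginals are continuous, three or more non-degenerate components immediately rule out CT. Your route is instead Gaussian-specific and fully elementary: CT between two non-degenerate Gaussian components forces correlation $-1$, and three mutual correlations of $-1$ are incompatible (the first two force the third to be $+1$). Your argument is self-contained and avoids the external citation, at the cost of not generalizing beyond the Gaussian case---which is irrelevant here since the proposition is stated only for Gaussians. Both approaches are short; yours is arguably cleaner for this particular statement.
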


Proposition \ref{prop:r1-G-1} shows the convenient property of the Gaussian distribution that the concepts of NA, NSD, NOD, NLOD, NUOD and NCD are all equivalent for this class. 
Parts (a) and (b) immediately tell that, for $n\ge 3$,
JM does not imply any of these concepts,
and none of these concepts implies JM. 
We will focus mostly on NA, NOD and NCD given their popularity and relative strength in the chains \eqref{eq:chain-2} and \eqref{eq:chain}.
For some other notions of negative dependence, see \cite{J14}.


\section{JM and negative dependence}
\label{sec:JM}

In this section, we explore the relation between JM and negative dependence concepts introduced in Section~\ref{sec:notions:negative:dependence} by means of several theoretical results. 
We first show  that a joint mix is NA under some properties of conditional independence and monotonicity.

\begin{theorem}\label{th:r1-1}
Let $\mathbf X $ be a joint mix and write $S_A =\sum_{i\in A} X_i$ for $A\subseteq [n]$. Suppose that
\begin{enumerate}[(a)]
\item 
 $\mathbf X_A$ and $\mathbf X_{[n]\setminus A}$ are  independent conditionally on $S_A  $ for every $A\subseteq [n]$;
 \item $\E\left[f(\mathbf X_A) |  S_A \right] $ is increasing in $S_A$ for every increasing function $f$  and $A\subseteq [n]$.\end{enumerate}  
Then $\mathbf X$ is NA.
\end{theorem}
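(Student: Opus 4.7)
The plan is to combine the two hypotheses with the joint-mix identity $S_A + S_{[n]\setminus A} = c$ a.s.\ to reduce the covariance of $f(\mathbf X_A)$ and $g(\mathbf X_B)$ to the covariance of an increasing function of $S_A$ with a decreasing function of $S_A$, which is automatically nonpositive.

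Fix disjoint $A, B \subseteq [n]$ and coordinate-wise increasing $f, g$ with $f(\mathbf X_A)$, $g(\mathbf X_B)$ square-integrable. First I would apply the conditional covariance decomposition with conditioning variable $S_A$:
\[
\cov(f(\mathbf X_A), g(\mathbf X_B)) = \E\bigl[\cov\bigl(f(\mathbf X_A), g(\mathbf X_B) \mid S_A\bigr)\bigr] + \cov\bigl(\E[f(\mathbf X_A)\mid S_A], \, \E[g(\mathbf X_B)\mid S_A]\bigr).
\]
Since $B \subseteq [n]\setminus A$, the random variable $g(\mathbf X_B)$ is a measurable function of $\mathbf X_{[n]\setminus A}$, so hypothesis~(a) applied to the set $A$ gives conditional independence of $f(\mathbf X_A)$ and $g(\mathbf X_B)$ given $S_A$. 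Hence the first term on the right vanishes.

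Next I would analyze the two conditional expectations in the remaining term. By hypothesis~(b) applied to $A$, there exists an increasing function $h_1$ with $\E[f(\mathbf X_A)\mid S_A] = h_1(S_A)$ a.s. For the second one, extend $g$ to $\tilde g : \R^{[n]\setminus A}\to\R$ by ignoring the coordinates outside $B$; this $\tilde g$ is still coordinate-wise increasing, so hypothesis~(b) applied to $[n]\setminus A$ yields an increasing $h_2$ with $\E[g(\mathbf X_B)\mid S_{[n]\setminus A}] = h_2(S_{[n]\setminus A})$ a.s. Here I invoke the joint-mix identity: $S_{[n]\setminus A} = c - S_A$ a.s., so conditioning on $S_A$ and on $S_{[n]\setminus A}$ generate the same $\sigma$-algebra, giving
\[
\E[g(\mathbf X_B)\mid S_A] = \E[g(\mathbf X_B)\mid S_{[n]\setminus A}] = h_2(c - S_A) \quad \text{a.s.}
\]

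Finally I would conclude: $h_1(S_A)$ is an increasing function of $S_A$, while $h_2(c - S_A)$ is a decreasing function of $S_A$, so $h_1(S_A)$ and $-h_2(c - S_A)$ are both increasing functions of the same random variable and hence comonotonic with nonnegative covariance; therefore $\cov(h_1(S_A), h_2(c - S_A)) \le 0$, giving $\cov(f(\mathbf X_A), g(\mathbf X_B)) \le 0$. The main conceptual point -- and the only place the joint-mix hypothesis is truly used -- is the identification of the $\sigma$-algebras generated by $S_A$ and $S_{[n]\setminus A}$, which flips monotonicity and produces the negative sign; the rest is standard manipulation of conditional covariances.
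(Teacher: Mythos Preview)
Your proof is correct and follows essentially the same approach as the paper: conditional covariance decomposition with respect to $S_A$, condition~(a) to kill the inner term, condition~(b) plus the joint-mix identity $S_A + S_{[n]\setminus A}=c$ to make the outer term a covariance between an increasing and a decreasing function of $S_A$. The only cosmetic difference is that the paper first reduces to the case where $A,B$ partition $[n]$ (by absorbing unused indices into the functions), whereas you handle general disjoint $A,B$ directly by extending $g$ to $\tilde g$ on $\R^{[n]\setminus A}$; these are equivalent maneuvers.
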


Theorem \ref{th:r1-1} can be compared with Theorem 2.6 of \cite{JP83}, which says that if $\mathbf X$ is independent and satisfies (b), then 
the conditional distribution of $\mathbf X$ given $S_{[n]} $ is NA.
Since $S_{[n]} $ is a constant for JM and 
(a) is implied by independence,
Theorem \ref{th:r1-1} means that the NA condition in Theorem 2.6 of \cite{JP83} holds if
the independence assumption  is weakened to  conditional independence (a), and in addition  we assume JM. 
Note, however, that JM and independence of $\mathbf X$ conflict each other unless $\mathbf X$ is degenerate. 

Most existing examples of NA random vectors are presented by \cite{JP83}.
Although Theorem~\ref{th:r1-1} 
does not directly give new examples of NA random vectors, it can be used to check NA in popular examples. 

\begin{example}
We use Theorem~\ref{th:r1-1} to check that the uniform distribution on the standard simplex $\Delta_{n}=\{(x_1,\dots,x_n)\in [0,1]^n: \sum_{i=1}^n x_i=1\}$ is NA.  
Let $\mathbf X$ follow the uniform distribution over $\Delta_n$ which is JM.
For every $A \subseteq [n]$, we can check that $(\mathbf{X}_A,\mathbf{X}_{[n]\setminus A}) | \{S_A=s\}$  for $s\in (0,1)$ follows a uniform distribution on $ (s\Delta_n)\times ((1-s)\Delta_n)$ and condition (a) holds.
Condition (b) follows by noting that $\mathbf{X}_A|\{S_A=s\} \laweq s \mathbf{X}_A  |\{S_A=1\}$ for $s\in (0,1)$ and thus  $\mathbf{X}_A|\{S_A=s\}$ is stochastically increasing in $s$. 
\end{example}

\begin{example}
The multinomial distribution is known to be NA \citep{JP83}.
We show this by virtue of Theorem~\ref{th:r1-1}. 
Let $\mathbf X\sim \operatorname{MN}_n(k,\mathbf{p})$ follow a multinomial distribution with $k$ trails, $n$ mutually exclusive events and event probabilities $\mathbf p=(p_1, \dots, p_n)$.
For $s\in \{0, \dots,k \}$ and every $A \subseteq [n]$ with $B=[n]\setminus A$, it holds that  $\mathbf{X}_A|\{S_A=s\}\sim \operatorname{MN}_{\vert A\vert}(s,\mathbf p_{A}/\sum_{i\in A}p_i)$ 
and $\mathbf{X}_{B}|\{S_A=s\}\sim 
\operatorname{MN}_{\vert B\vert}(k-s,\mathbf p_{B}/\sum_{i\in B}p_i)$, where $p_A=(p_i)_{i\in A}$ and $p_B=(p_i)_{i\in B}$.
Then conditions~(a) and (b) can be checked directly by calculation.
\end{example}

 We next focus on   exchangeable joint mixes, which exhibit some specific forms of negative dependence.  A random vector $\mathbf X=(X_1,\dots,X_n)$ (or its distribution) is called \emph{exchangeable} if $\mathbf X \laweq \mathbf X^{\pi}$ for all $\pi\in {\mathfrak S_n}$, where ${\mathfrak S_n}$ is the set of all permutations on $[n]$ and $\mathbf X^{\pi} = (X_{\pi(1)},\dots,X_{\pi(n)})$.
First, we note that if $\mathbf X$ is CT with identical marginal distributions equal to $F$, then the distribution of $\mathbf X$ is explicitly given by
$\p(\mathbf X\le \mathbf x) = (F(x_1)+\dots+F(x_n)-n+1)_+ $
for $\mathbf x=(x_1,\dots,x_n)\in \R^n$; see, for example,  Theorem~3.3 of \cite{PW15}.  Clearly, this distribution is exchangeable.
Moreover, 
for any given marginal distribution  $ F$ which is $n$-completely mixable,
there exists an exchangeable joint mix with marginals $F$; see~Proposition 2.1 of \cite{PRWW19}. Note that an exchangeable joint mix is NCD because each bivariate correlation coefficient is equal to $-1/(n-1)$.
The next proposition states that such an exchangeable joint mix is also negatively dependent in the sense of NSD, NUOD and NULD if so is $\mathbf X$.

\begin{proposition}\label{prop:non:exchangeable:to:exchangeable:nod}
If a univariate distribution function $F$ supports an NSD $n$-joint mix, then  $F$ supports an exchangeable NSD $n$-joint mix.
The statement holds true if NSD is replaced by  NOD, NUOD or NLOD.
\end{proposition}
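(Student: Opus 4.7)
The plan is to symmetrize an arbitrary NSD joint mix by averaging over coordinate permutations. Let $\mathbf X=(X_1,\dots,X_n)$ be an NSD joint mix with $X_i\sim F$ for all $i\in[n]$, and let $\pi$ be a random permutation distributed uniformly on $\mathfrak S_n$, taken independently of $\mathbf X$. Define $\mathbf Y=\mathbf X^\pi=(X_{\pi(1)},\dots,X_{\pi(n)})$. Then $Y_i\sim F$ for each $i$ (condition on $\pi$), and $\sum_{i=1}^n Y_i=\sum_{i=1}^n X_i=c$ a.s., so $\mathbf Y$ is a joint mix with the same center. Exchangeability follows from the fact that, for any fixed $\sigma\in\mathfrak S_n$, $\mathbf Y^\sigma=\mathbf X^{\pi\sigma}$ and $\pi\sigma$ is again uniform on $\mathfrak S_n$ by the bijectivity of right multiplication.

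It remains to verify that $\mathbf Y$ inherits the NSD property (and analogously for NOD, NUOD, NLOD). For a supermodular $\psi:\R^n\to\R$ and any $\pi\in\mathfrak S_n$, the function $\psi_\pi:\mathbf x\mapsto \psi(\mathbf x^\pi)$ is also supermodular, since $(\mathbf x\wedge\mathbf y)^\pi=\mathbf x^\pi\wedge \mathbf y^\pi$ and similarly for $\vee$. Because $\mathbf X$ is NSD,
\begin{equation*}
\E[\psi(\mathbf X^\pi)]=\E[\psi_\pi(\mathbf X)]\le \E[\psi_\pi(\mathbf X^\perp)]=\E[\psi((\mathbf X^\perp)^\pi)]=\E[\psi(\mathbf X^\perp)],
\end{equation*}
where the last equality uses that the components of $\mathbf X^\perp$ are i.i.d.\ with law $F$, so its distribution is invariant under coordinate permutations. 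Conditioning on $\pi$ and averaging yields
\begin{equation*}
\E[\psi(\mathbf Y)]=\frac{1}{n!}\sum_{\pi\in\mathfrak S_n}\E[\psi(\mathbf X^\pi)]\le \E[\psi(\mathbf X^\perp)]=\E[\psi(\mathbf Y^\perp)],
\end{equation*}
since $\mathbf Y^\perp\laweq \mathbf X^\perp$. Thus $\mathbf Y$ is NSD.

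For NUOD (and NLOD by the same argument), replace $\psi$ by the indicator $\mathbf x\mapsto \one\{\mathbf x>\mathbf t\}$. One checks that $\p(\mathbf X^\pi>\mathbf t)=\p(\mathbf X>\mathbf t^{\pi^{-1}})\le \p(\mathbf X^\perp>\mathbf t^{\pi^{-1}})=\p(\mathbf X^\perp>\mathbf t)$ using NUOD of $\mathbf X$ at the reordered threshold $\mathbf t^{\pi^{-1}}$ and the permutation invariance of $\mathbf X^\perp$. Averaging over $\pi$ again gives $\p(\mathbf Y>\mathbf t)\le \p(\mathbf Y^\perp>\mathbf t)$. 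NOD follows from combining NUOD and NLOD.

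There is no real obstacle in this argument; the only thing to verify carefully is the closure under coordinate permutations of the two relevant test classes (supermodular functions and upper/lower orthants), together with the crucial observation that $\mathbf X^\perp$ is permutation-invariant in distribution thanks to the common marginal $F$. This last point is exactly why the proof does not extend to the heterogeneous case, and it also explains why NCD needs no separate treatment: for exchangeable vectors $\cov(Y_i,Y_j)$ is constant in $(i,j)$, $i\ne j$, and its value $-\var(X_1)/(n-1)$ is nonpositive as already noted in the discussion preceding the proposition.
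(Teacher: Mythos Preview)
Your proof is correct and follows essentially the same route as the paper: randomize $\mathbf X$ by a uniformly chosen permutation and average. You are in fact a bit more explicit than the paper in justifying the key inequality $\E[\psi(\mathbf X^\pi)]\le \E[\psi(\mathbf X^\perp)]$, by noting that $\psi_\pi$ is again supermodular and that $\mathbf X^\perp$ is permutation-invariant in law since the marginals are identical.
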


One may wonder whether Proposition \ref{prop:non:exchangeable:to:exchangeable:nod} holds with NSD replaced by NA. 
Unfortunately, this question remains open, as our proof for Proposition \ref{prop:non:exchangeable:to:exchangeable:nod} does not extend to  NA.




Next, we present a necessary condition  for a tuple of distributions to support any negatively dependent joint mixes. 
This condition is also sufficient when the marginal distributions are Gaussian.
 
\begin{proposition}\label{prop:necessary}
If the tuple of distributions $(F_1,\dots,F_n)$ with finite variance vector $(\sigma_1^2,\dots,\sigma_n^2)$  supports an NCD joint mix,
then   \begin{equation}\label{eq:maxcond} 2 \max_{i\in [n]}\sigma_i^2 \le \sum_{i\in [n]}\sigma_i^2.
\end{equation}
\end{proposition}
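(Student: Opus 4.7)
The plan is to exploit the algebraic identity $X_k = c - \sum_{i\neq k} X_i$ that comes for free from the joint mix property, then read off a variance bound on the index $k$ that maximizes $\sigma_i^2$, by using NCD only on the pairs that avoid $k$.

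More precisely, I would fix $k \in [n]$ with $\sigma_k^2 = \max_{i\in[n]} \sigma_i^2$. Since $\sum_{i=1}^n X_i = c$ a.s., we can write $X_k = c - \sum_{i\neq k} X_i$, which is a constant translation. Taking the variance on both sides yields
\begin{equation*}
\sigma_k^2 \;=\; \var\!\left(\sum_{i\neq k} X_i\right) \;=\; \sum_{i\neq k} \sigma_i^2 \;+\; 2\!\sum_{\substack{i,j\in [n]\setminus\{k\}\\ i<j}} \cov(X_i, X_j).
\end{equation*}
Here is the key observation: the NCD assumption on $\mathbf X$ gives $\cov(X_i,X_j)\le 0$ for \emph{all} $i\neq j$, and in particular for all the pairs $\{i,j\}$ appearing in the double sum, which are exactly the pairs in $[n]\setminus\{k\}$. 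Therefore the double sum is nonpositive, and we obtain $\sigma_k^2 \le \sum_{i\neq k} \sigma_i^2$. Adding $\sigma_k^2$ to both sides gives $2\sigma_k^2 \le \sum_{i\in [n]} \sigma_i^2$, which is the required inequality \eqref{eq:maxcond}.

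There is essentially no technical obstacle: the argument needs only the two facts $\var(\sum X_i)=0$ and pairwise nonpositive covariance, and uses NCD in its weakest form (pairs not involving $k$). The only thing to be slightly careful about is that the variances are assumed finite, so that both the decomposition of $\var(\sum_{i\neq k} X_i)$ and the covariances are well defined — but this is given in the hypothesis. It is also worth noting (as a sanity check) that the bound is tight: for $n=2$, a CT joint mix with $F_1=F_2$ attains equality, and for the Gaussian case the condition is sufficient as well, foreshadowing the sharper characterization to come.
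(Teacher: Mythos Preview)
Your proposal is correct and follows essentially the same approach as the paper's proof: both isolate the index of maximal variance, use the joint mix identity to write $\var(X_k)=\var\bigl(\sum_{i\ne k}X_i\bigr)$, bound this by $\sum_{i\ne k}\sigma_i^2$ via NCD, and then add $\sigma_k^2$ to both sides. The paper just writes the variance inequality in one line without expanding the covariance sum, but the argument is identical.
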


  Since NCD is weaker than NOD and NA,
  the necessary condition \eqref{eq:maxcond} is also necessary for NOD and NA joint mixes.

 For a given tuple of distributions $(F_1,\dots,F_n)$ with   finite variance vector $(\sigma_1^2,\dots,\sigma_n^2) $,
 the condition \eqref{eq:maxcond} is not necessary for the existence of an NCD random vector, since an independent random vector supported by $(F_1,\dots,F_n)$ always exists and it is NCD.
More interestingly, the condition  \eqref{eq:maxcond}  is not necessary for the existence of a joint mix either. Indeed, as shown by \citet[Corollary 2.2]{WW16},
a useful necessary condition for a joint mix supported by $(F_1,\dots,F_n)$ to exist is
 \begin{equation}\label{eq:maxcond2} 2 \max_{i\in [n]}\sigma_i  \le \sum_{i\in [n]}\sigma_i .
\end{equation}
We note that \eqref{eq:maxcond} is strictly stronger than  \eqref{eq:maxcond2},
because for any $j\in [n]$, \eqref{eq:maxcond} gives
$$
\sigma_j^2 \le\sum_{i\in [n]\setminus \{j\}} \sigma_i^2 ~\Longrightarrow~
\sigma_j  \le
\left({ \sum_{i\in [n]\setminus \{j\}} \sigma_i^2} \right)^{1/2} \le \sum_{i\in [n]\setminus \{j\}} \sigma_i,
$$
which implies \eqref{eq:maxcond2}. It is clear that \eqref{eq:maxcond} and \eqref{eq:maxcond2} are not equivalent; for example, $(\sigma_1,\sigma_2,\sigma_3)=(2,2,3)$ satisfies \eqref{eq:maxcond2} but not \eqref{eq:maxcond}.

By Proposition 2.4 of \cite{WPY13}, if the marginal distributions $F_1,\dots,F_n$ are Gaussian, then
the condition \eqref{eq:maxcond2} is necessary and sufficient for a joint mix supported by $(F_1,\dots,F_n)$ to exist. Hence, the condition \eqref{eq:maxcond}, which is strictly stronger than \eqref{eq:maxcond2}, is not necessary for a joint mix to exist.
On the other hand,
\eqref{eq:maxcond} is generally not sufficient for an NCD joint mix to exist either, since it is well known that a joint mix may not exist even if the marginal distributions are identical.
Nevertheless, it turns out  that \eqref{eq:maxcond} is necessary and sufficient for an NCD or NA joint mix to exist for Gaussian marginals. 

\begin{theorem}
\label{th:nodfull}
A tuple of univariate Gaussian distributions
with variance vector $(\sigma_1^2,\dots,\sigma_n^2)$  supports an NCD or NA joint mix if and only if \eqref{eq:maxcond} holds, that is,  $2 \max_{i\in [n]}\sigma_i^2 \le \sum_{i\in [n]}\sigma_i^2$.
Moreover, such an NCD or NA joint mix can be chosen as a Gaussian random vector.
\end{theorem}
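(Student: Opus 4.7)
The plan is to handle the two directions separately, reducing the harder ``if'' direction to a combinatorial existence problem about weighted graph Laplacians.

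For the ``only if'' direction, I would invoke Proposition~\ref{prop:necessary}: any NCD joint mix supported by $(F_1,\dots,F_n)$ forces \eqref{eq:maxcond}, and since NA implies NCD by the chain \eqref{eq:chain}, an NA joint mix satisfies \eqref{eq:maxcond} as well. Notice that the marginals being Gaussian plays no role here; only the existence of finite variances is used.

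For the ``if'' direction together with the ``moreover'' clause, I construct a centered Gaussian joint mix $\mathbf X \sim \operatorname{N}_n(\mathbf 0_n, \Sigma)$ directly. By Proposition~\ref{prop:r1-G-1}(a), NCD and NA coincide within the Gaussian family, so it suffices to exhibit an NCD Gaussian JM. Proposition~\ref{prop:r1-G-1}(b) turns the JM condition into $\mathbf 1_n^\top \Sigma \mathbf 1_n = 0$, which for positive semi-definite $\Sigma$ forces $\Sigma \mathbf 1_n = \mathbf 0_n$. Combined with $\Sigma_{ii}=\sigma_i^2$ and the NCD requirement $\Sigma_{ij}\le 0$ for $i\ne j$, writing $w_{ij} := -\Sigma_{ij} \ge 0$ recasts the task as: find a symmetric nonnegative matrix $W=(w_{ij})$ with $w_{ii}=0$ and $\sum_{j\ne i} w_{ij} = \sigma_i^2$ for every $i\in [n]$. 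The matrix $\Sigma = \mathrm{diag}(\sigma_1^2,\dots,\sigma_n^2) - W$ is then the Laplacian of the weighted graph with edge weights $w_{ij}$, and is automatically positive semi-definite.

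The main obstacle is the auxiliary combinatorial lemma: given $r_1,\dots,r_n \ge 0$ with $2 \max_i r_i \le S := \sum_i r_i$, symmetric nonnegative weights with row sums $r_i$ and zero diagonal exist (I apply this with $r_i=\sigma_i^2$). I would prove it by induction on $n$. After sorting $r_1 \ge \dots \ge r_n$, the inductive step uses a threshold construction: choose the unique $t \ge 0$ with $\sum_{j\ge 2}(r_j - t)_+ = r_1$ and set $w_{1j} := (r_j - t)_+$ for $j\ge 2$. The residual row sums $r'_j := r_j - w_{1j} = \min(r_j,t)$ for $j\ge 2$ sum to $S-2r_1$ and are bounded by $t$, so feasibility of the $(n-1)$-instance reduces to $2t \le S-2r_1$. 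Writing $J := \{j\ge 2 : r_j > t\}$, one has $t = |J|^{-1}(\sum_{j\in J}r_j - r_1)$, and the required inequality rearranges to $(2-|J|)\,t \le \sum_{j\notin J,\,j\ge 2} r_j$, which is automatic as soon as $|J|\ge 2$ (the left-hand side is then nonpositive); the degenerate cases $|J|\in\{0,1\}$ force all but two of the $r_i$ to vanish and are verified by hand. Once $W$ is constructed, $\mathbf X \sim \operatorname{N}_n(\mathbf 0_n,\Sigma)$ with $\Sigma = \mathrm{diag}(\sigma_1^2,\dots,\sigma_n^2)-W$ is a Gaussian NCD JM, which is also NA by Proposition~\ref{prop:r1-G-1}(a), completing the proof.
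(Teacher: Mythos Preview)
Your proof is correct and takes a genuinely different route from the paper. The paper constructs the Gaussian joint mix explicitly as a linear combination of independent building blocks: an $(n-1)$-dimensional standard Gaussian $\mathbf Y$ together with independent equicorrelated Gaussian vectors $\mathbf Z^{(m)} \sim \mathrm N_{n-m+1}(\mathbf 0, P_{n-m+1}^{*})$ for $m=1,\dots,n-1$, and then verifies directly that the resulting $(X_1,\dots,X_n)$ has the right marginals, sums to zero, and has all nonpositive cross-covariances (hence is NA via Proposition~\ref{prop:r1-G-1}). Your approach instead abstracts the problem to the existence of a covariance matrix of Laplacian type: you observe that an NCD Gaussian JM must have $\Sigma = \mathrm{diag}(\sigma_i^2) - W$ with $W$ symmetric, nonnegative, zero-diagonal, and row sums $\sigma_i^2$, and that any such $\Sigma$ is automatically positive semi-definite as a weighted graph Laplacian. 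This reduces the construction to a clean combinatorial lemma about prescribed row sums under condition~\eqref{eq:maxcond}, which you handle by a threshold-and-peel induction. Your argument is more conceptual and makes transparent why \eqref{eq:maxcond} is exactly the right hypothesis (it is the obvious necessary constraint for the row-sum problem, and your induction shows it is sufficient), while the paper's construction is more explicit and delivers a closed-form covariance matrix without recursion.
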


 The negative dependence concepts of NA,  NSD, NOD, NLOD, NUOD and NCD are all equivalent for multivariate Gaussian distributions, as we see in Proposition \ref{prop:r1-G-1}. Hence, \eqref{eq:maxcond} is also necessary and sufficient for an NSD, NOD, NLOD, or NUOD  joint mix to exist for Gaussian marginals.
For elliptical distributions (details in Section \ref{sec:elliptical}),
the corresponding statement to Theorem \ref{th:nodfull} holds for NCD but not the other forms of negative dependence; see Proposition \ref{prop:R1-el}.

\begin{example}
\label{ex:nodgaussian}
In case $n=3$,  for any marginals with variance vector $(\sigma_1^2,\sigma_2^2,\sigma_3^2)$, the covariance of a joint mix $\mathbf X$ is uniquely given by
\begin{equation*}
\Sigma =\left(
           \begin{array}{ccc}
             \sigma_1^2 & \frac{1}{2}(\sigma_3^2-\sigma_1^2-\sigma_2^2) & \frac{1}{2}(\sigma_2^2-\sigma_1^2-\sigma_3^2) \\
              \frac{1}{2}(\sigma_3^2-\sigma_1^2-\sigma_2^2) & \sigma_2^2 & \frac{1}{2}(\sigma_1^2-\sigma_2^2-\sigma_3^2) \\
             \frac{1}{2}(\sigma_2^2-\sigma_1^2-\sigma_3^2) & \frac{1}{2}(\sigma_1^2-\sigma_2^2-\sigma_3^2)  & \sigma_3^2 \\
           \end{array}
         \right);
\end{equation*}
see \citet[Corollary 6]{XY20} for this statement.
If $\mathbf X$ is Gaussian, it is clear that $\mathbf X$ is NA if and only if \eqref{eq:maxcond} holds.
In case $n\ge 4$, for Gaussian marginals
 we can obtain an explicit covariance matrix of an NA joint mix from the proof of Theorem \ref{th:nodfull}.
\end{example}

%


We end this section with   two decomposition results of a joint mix into NA joint mixes, one through a random vector decomposition, and one through a mixture decomposition.
We first establish a new result showing that any finitely supported discrete joint mix can be decomposed into a linear combination of  binary multinomial random vectors.
 A binary multinomial random vector   is a random vector $(X_1,\dots,X_n)$ taking values in $\{0,1\}^n$ such that $\sum_{i=1}^n X_i=1$; that is, exactly one of $X_1,\dots,X_n$ takes the value $1$.
By definition, binary multinomial random vectors are CT (hence NA)  and JM.

\begin{theorem}\label{th:r1-2}
Suppose that the random vector $\mathbf X$ takes values in a finite set.
Then, $\mathbf X$ is JM if and only if 
it can be represented as a finite linear combination of binary multinomial random vectors.
\end{theorem}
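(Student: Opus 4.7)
The ($\Leftarrow$) direction is immediate: if $\mathbf X = \sum_{k=1}^K c_k \mathbf Y^{(k)}$ with each $\mathbf Y^{(k)}$ a binary multinomial (so that $\mathbf 1_n^\top \mathbf Y^{(k)} = 1$), then $\mathbf 1_n^\top \mathbf X = \sum_{k=1}^K c_k$ is a constant, so $\mathbf X$ is JM.

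For the ($\Rightarrow$) direction, the plan is to reduce the problem to a linear-algebraic decomposition of a deterministic matrix. Let $\mathbf X$ be a JM with center $c$ and finite support $\{\mathbf s^{(1)}, \dots, \mathbf s^{(m)}\}$, and form the $m \times n$ matrix $S$ whose $j$-th row is $\mathbf s^{(j)}$; by the JM property every row of $S$ sums to $c$. Let $\mathcal E$ denote the (finite) set of $m\times n$ matrices in which every row is a standard basis vector of $\R^n$. Elements of $\mathcal E$ correspond bijectively to maps $\alpha : [m] \to [n]$ via $E_\alpha := (\mathbf{1}\{\alpha(j) = \ell\})_{j,\ell}$.

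The key lemma I will establish is that $\mathrm{span}(\mathcal E) = \{M \in \R^{m\times n} : \text{all row sums of } M \text{ are equal}\}$. The inclusion $\subseteq$ is obvious, since every row sum of $\sum_k c_k E_{\alpha_k}$ equals $\sum_k c_k$. For the reverse inclusion, fix $E_0 := E_{\alpha_0}$ with $\alpha_0 \equiv n$, and for each $(j,\ell) \in [m]\times [n-1]$ let $E_{(j,\ell)} := E_{\alpha_{(j,\ell)}}$ where $\alpha_{(j,\ell)}(j) = \ell$ and $\alpha_{(j,\ell)}(j') = n$ for $j' \neq j$. Then $E_{(j,\ell)} - E_0$ has $+1$ at $(j,\ell)$, $-1$ at $(j,n)$, and zeros elsewhere; these $m(n-1)$ differences span the $m(n-1)$-dimensional space of row-sum-zero matrices, and together with $E_0$ they span the whole $(m(n-1)+1)$-dimensional space of equal-row-sum matrices. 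Applied to $S$, this yields $S = \sum_{k=1}^K c_k E_{\alpha_k}$ for some finite $K$ and real $c_k$; reading off any row sum forces $\sum_k c_k = c$ automatically.

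To translate back to random vectors, define $\mathbf Y^{(k)}$ on the probability space of $\mathbf X$ by $\mathbf Y^{(k)} := e_{\alpha_k(J)}$, where $J$ is the (a.s.\ well-defined) index with $\mathbf X = \mathbf s^{(J)}$. Each $\mathbf Y^{(k)}$ is then a binary multinomial, and on the event $\{\mathbf X = \mathbf s^{(j)}\}$ we obtain $\sum_{k=1}^K c_k \mathbf Y^{(k)} = \sum_k c_k e_{\alpha_k(j)} = S_{j,\cdot} = \mathbf s^{(j)} = \mathbf X$, hence $\mathbf X = \sum_{k=1}^K c_k \mathbf Y^{(k)}$ almost surely. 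The main obstacle I foresee is simply spotting the right reformulation; once the matrix viewpoint is in place the remainder reduces to a basis/dimension count, and the random construction of the $\mathbf Y^{(k)}$ as deterministic functions of $\mathbf X$ is automatic.
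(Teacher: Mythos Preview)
Your proof is correct and takes a genuinely different route from the paper. The paper first reduces to the case where all components of $\mathbf X$ are strictly positive (by a shift), then uses a ``layer-cake'' construction: it lists the finitely many possible values $v_0<v_1<\dots<v_K$ of the partial sums $\sum_{j=1}^i X_j$ and sets
\[
Y_{k,i}=\id_{\{\sum_{j=1}^i X_j\ge v_k\}}-\id_{\{\sum_{j=1}^{i-1} X_j\ge v_k\}},
\]
checking that each $\mathbf Y_k$ is binary multinomial and that $\sum_k (v_k-v_{k-1})\mathbf Y_k=\mathbf X$ via a telescoping identity. Your approach instead encodes the support of $\mathbf X$ as a matrix with equal row sums and reduces the claim to the purely linear-algebraic fact that such matrices are spanned by the one-hot-per-row matrices $\mathcal E$; the random $\mathbf Y^{(k)}$ are then read off as deterministic functions of the support index $J$.

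What each approach buys: the paper's construction is more explicit and, in the positive case, yields a decomposition with all nonnegative coefficients $v_k-v_{k-1}>0$ (the general case introduces at most $n$ additional signed terms from the shift). Your argument is cleaner---no positivity reduction, no telescoping identity---and gives an a priori bound of $m(n-1)+1$ terms in the decomposition, where $m$ is the support size; it also makes transparent that the obstruction is purely linear (equal row sums) rather than combinatorial. Neither bound on the number of terms dominates the other in general.
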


Theorem \ref{th:r1-2} generalizes Theorem 2 of \cite{W15} which has a  decomposition of   a joint mix  taking  nonnegative integer values as the sum of binary  multinomial random vectors. The assumption of finite support in Theorem \ref{th:r1-2} does not seem to be dispensable with the current proof techniques. 

Next, using the fact that the distribution of a joint mix can be written as a mixture of discrete uniform (DU) distributions on $n$ points in $\R^n$, we obtain the following decomposition.

\begin{proposition}\label{prop:mixture}
The distribution of any exchangeable joint mix with center $\mu$ can be written as a  mixture of distributions of exchangeable NA joint mixes with center $\mu$.
\end{proposition}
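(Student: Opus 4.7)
The plan is to use the classical fact that, for any fixed vector $\mathbf x \in \R^n$, the uniform distribution on the orbit $\{\mathbf x^{\pi} : \pi \in \mathfrak S_n\}$ (the \emph{permutation distribution} of $\mathbf x$) is NA: this is a standard example in \cite{JP83}. It is also exchangeable by construction, and whenever $\sum_{i=1}^n x_i = \mu$ it is a joint mix with center $\mu$. So these permutation distributions are exactly the building blocks the proposition calls for; everything then reduces to a disintegration.

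Given an exchangeable joint mix $\mathbf X$ with center $\mu$, I would let $\pi$ be uniform on $\mathfrak S_n$ and independent of $\mathbf X$, and then consider $\mathbf X^\pi$. Exchangeability of $\mathbf X$ gives $\mathbf X \laweq \mathbf X^\pi$, and conditioning on $\mathbf X$ expresses this common law as
\begin{equation*}
\p(\mathbf X \in A) \;=\; \p(\mathbf X^\pi \in A) \;=\; \int \nu_{\mathbf x}(A)\, \d P_{\mathbf X}(\mathbf x),\qquad A\subseteq \R^n \text{ Borel},
\end{equation*}
where $\nu_{\mathbf x}$ denotes the permutation distribution of $\mathbf x$. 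For $P_{\mathbf X}$-almost every $\mathbf x$ one has $\sum_{i=1}^n x_i = \mu$, so by the previous paragraph each $\nu_{\mathbf x}$ appearing in the mixture is an exchangeable NA joint mix with center $\mu$. This is exactly the required decomposition.

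The only nontrivial input is the NA property of the permutation distribution of an arbitrary real vector, which I would simply cite from \cite{JP83}; exchangeability of $\nu_{\mathbf x}$ and the joint-mix property with center $\mu$ are immediate from the construction. No serious obstacle is anticipated: the main point of the argument is really the observation that the exchangeability hypothesis on $\mathbf X$ is precisely what lets us upgrade the mixture components from arbitrary (possibly non-exchangeable) discrete uniform laws supplied by the preceding remark to the symmetric permutation distributions, which are automatically NA.
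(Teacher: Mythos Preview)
Your proposal is correct and follows essentially the same route as the paper: both write the law of the exchangeable joint mix as the mixture $\int U_{\mathbf a}\,\d P_{\mathbf X}(\mathbf a)$ of permutation distributions (you via an independent uniform $\pi$ and conditioning on $\mathbf X$, the paper via averaging $G(A)=G(A^\pi)$ over $\mathfrak S_n$), and both invoke \cite{JP83} for the NA property of each $U_{\mathbf a}$.
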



\section{A multi-marginal optimal transport problem}
\label{sec:otp}

To connect JM and negative dependence, a natural question is  whether  in some applications  negatively dependent joint mixes have   additional attractive properties that are not shared by other joint mixes. We present an optimal transport problem with uncertainty in this section where a combination of negative dependence and JM naturally appears.

In the multi-marginal optimal transport theory~\citep{S15, P15}, a general objective is 
$$
\mbox{to minimize }
\E[c(X_1,\dots,X_n)] \mbox{~~~subject to $X_i\sim F_i$, $i\in[n]$},
$$
where $c:\R^n\to\R$ is a cost function and $F_1,\dots,F_n$ are specified marginal distributions.
This problem is referred to as the \emph{Monge-Kantorovich problem}.
 In the context of this paper, the distributions $F_1,\dots,F_n$  are  on $\R$. 
 In all optimization problems we discussed in this section, the constraint is always   $X_i\sim F_i$ for each $i\in[n]$ with $F_1,\dots,F_n$ given, and we   assume that $F_1,\dots,F_n$ have finite second moments throughout this section.


\subsection{Optimal transport under uncertainty on the set of components}

We will consider a special class of cost functions, leading to the Monge-Kantorovich problem
\begin{equation}\label{eq:f}
\mbox{to minimize }
\E\left[f\left(\sum_{i=1}^n X_i\right)\right] \mbox{~~~subject to $X_i\sim F_i$, $i\in[n]$},
\end{equation} 
where $f:\R\to\R$ is a convex function. 
This special setting is important to JM because, 
assuming that a joint mix with marginal distributions $F_1,\dots,F_n$ exists, then any joint mix is an optimizer of  \eqref{eq:f} due to Jensen's inequality, and conversely, any optimizer of \eqref{eq:f} has to be a joint mix if $f$ is strictly convex. 
As discussed by \cite{PW15} and \cite{WW16}, one of the main motivations of JM is to solve optimization problems similar to \eqref{eq:f}. 

Since joint mixes  with given marginal distributions are not unique, 
we wonder whether a negatively dependent joint mix plays a special role 
among optimizers to \eqref{eq:f}. This is our main question to address.
   
Although each joint mix minimizes  \eqref{eq:f}, their distributions can be quite different.
For a concrete example,  suppose that the marginal distributions are standard Gaussian, and let $n$ be even.
With these marginals, $\mathbf X^{\rm E}\sim \mathrm N_{n}(\mathbf 0_n, P_n^{*})$ is an NA joint mix, where 
 $P_n^{*}$  is a matrix with diagonal entries being $1$ and off-diagonal entries being $-1/(n-1)$, and
$\mathbf X^{\rm A} =((-1)^i Z)_{i\in [n]}$, $Z\sim \mathrm N_1(0,1)$,
is another joint mix which is not NA.
Here,   ``E"  stands for ``exchangeable" and
``A" stands for ``alternating".
These two joint mixes have the same value $f(0)$  for  \eqref{eq:f}. Nevertheless, 
$\mathbf X^{\rm A}$ may be seen as undesirable in some situations, because some subgroups of its components are comonotonic.
Inspired by this,
we consider the cost of a subset $K\subseteq [n]$ of risks 
$
f\left(\sum_{i\in K} X_i\right)  .
$
If $K$ is known to the decision maker, then we are back to \eqref{eq:f} with $(X_i)_{i\in [n ]}$ replaced by $(X_i)_{i\in K}$.

In different applications, allowing  a flexible choice of $K$ may represent the absence of some risks in a risk aggregation pool,
 missing particles in a quantum system, an unspecified number of simulation sizes in a sampling program, or
 uncertainty on the  participation of some agents in a risk-sharing game.
In each context above, a decision maker may not know $K$ a priori, and hence she may be interested in minimizing a  weighted average of the cost, that is
\begin{equation}\label{eq:mu}
C_{\mu}^f(X_1,\dots,X_n):=\sum_{K\subseteq [n]} \E\left[f\left(\sum_{i\in K} X_i\right)\right] \mu(K),\end{equation}
where $\mu$ is a probability on the sample space  $2^{[n]}$, the power set of $[n]$, and $ \sum_{i\in K} X_i $ is set to $0$ if $K$ is empty;
here we slightly abuse the notation by setting $\mu(K)=\mu(\{K\})$, which should not lead to any confusion. 

We consider the formulation of uncertainty as in the framework of \cite{GS89}. 
With a probability on $2^{[n]}$ uncertain, we consider a set $\mathcal{M}$ of  probabilities   on $2^{[n]}$, called an uncertainty set. The formulation of \eqref{eq:mu} with uncertainty set $\mathcal M$ is  
\begin{equation}\label{eq:mu_uncertainty1}
\mbox{to minimize } 
\sup_{\mu \in \mathcal M} C_{\mu}^f(X_1,\dots,X_n)
\mbox{~~~subject to $X_i\sim F_i$, $i\in[n]$}.
\end{equation} 
The supremum represents a worst-case attitude towards uncertainty, which is axiomatized by \cite{GS89} in decision theory. 
We explain two simple special cases of \eqref{eq:mu_uncertainty1}.
First, by taking $\mathcal M$ as the set of all probabilities on $2^{[n]}$,
the objective in \eqref{eq:mu_uncertainty1} becomes
\begin{equation}\label{eq:uncertainty_f}
\sup_{\mu \in \mathcal M} C_{\mu}^f(X_1,\dots,X_n) = \max_{K\subseteq [n]}  \E\left[f\left(\sum_{i\in K} X_i\right)\right],
 \end{equation}
which represents the situation of having no information on $K$.
Second, by taking $\mathcal M$ as the set of all probabilities on $2^{[n]}$ supported by
sets $K$ of cardinality $|K|=k\in [n]$, the objective in \eqref{eq:mu_uncertainty1} becomes
\begin{equation}\label{eq:uncertainty2_f}
\sup_{\mu \in \mathcal M} C_{\mu}^f(X_1,\dots,X_n) = \max_{K\subseteq [n],~|K|=k}  \E\left[ f\left(\sum_{i\in K} X_i\right)\right], 
\end{equation} 
which represents the situation where  one knows  how large the subset $K$ is, but not precisely how it is composed.

The problem \eqref{eq:mu_uncertainty1} is generally difficult to solve. 
We will first focus on the homogeneous case where $F=F_1=\dots=F_n$, and this will be relaxed in Section \ref{sec:heter}. 
With this interpretation, it is natural to consider uncertainty sets $\mathcal M$ that are symmetric. 
We say  that $\mathcal M$ is \emph{symmetric} if $\mu \in \mathcal M$ implies $\mu_\pi \in \mathcal M$ for $\pi\in \mathfrak S_n$, where $\mu_\pi$ is a permutation of $\mu$, defined by $\mu_{\pi}(K)=\mu(\{\pi(i): i\in K\})$  for $K\subseteq [n]$. 

Recall that we are interested in whether a negatively dependent joint mix plays a special role 
among other joint mixes. The next proposition provides a   step in this direction. Also recall that $P_n^*$ is the correlation matrix with off-diagonal entries equal to $-1/(n-1)$.
\begin{proposition}\label{prop:r1-1} Suppose that $\mathcal M$ is symmetric, and $\mathbf X$ is a joint mix with identical marginals  $F$. 
Then there exists  an exchangeable NCD joint mix $\mathbf X^{\rm E}$ with marginals $F$ and  correlation matrix $P_n^{*}$ such that 
$$
\sup_{\mu \in \mathcal M}  C_{\mu}^f(\mathbf X^{\rm E}) 
\le 
\sup_{\mu \in \mathcal M}  C_{\mu}^f(\mathbf X)
$$
for all measurable functions $f:\R \to \R$.
\end{proposition}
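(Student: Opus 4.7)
The plan is to construct $\mathbf{X}^{\rm E}$ by randomly permuting the components of $\mathbf{X}$. Let $\pi$ be uniformly distributed on $\mathfrak{S}_n$ and independent of $\mathbf{X}$, and set $\mathbf{X}^{\rm E} = (X_{\pi(1)}, \dots, X_{\pi(n)})$. First I would verify the structural claims: because $X_i \sim F$ for each $i$, every $X^{\rm E}_i$ is still distributed as $F$; because $\sum_i X^{\rm E}_i = \sum_i X_i$ is a.s.\ constant, $\mathbf{X}^{\rm E}$ is a joint mix; and because composing $\pi$ with any fixed $\sigma \in \mathfrak{S}_n$ yields a uniform permutation, $\mathbf{X}^{\rm E}$ is exchangeable. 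Exchangeability together with $\var(\sum_i X^{\rm E}_i) = 0$ and finite marginal variance $\sigma^2$ (implicit in the correlation-matrix claim) forces $\cov(X^{\rm E}_1, X^{\rm E}_2) = -\sigma^2/(n-1)$, so the correlation matrix of $\mathbf{X}^{\rm E}$ is $P_n^{*}$; in particular $\mathbf{X}^{\rm E}$ is NCD.

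For the inequality, the key identity is
$$
\E\!\left[f\!\left(\sum_{i \in K} X^{\rm E}_i\right)\right] = \frac{1}{n!}\sum_{\pi \in \mathfrak{S}_n} \E\!\left[f\!\left(\sum_{j \in \pi(K)} X_j\right)\right],
$$
obtained by conditioning on $\pi$ and using $\sum_{i\in K} X_{\pi(i)} = \sum_{j\in\pi(K)} X_j$. Multiplying by $\mu(K)$, summing over $K$, swapping the finite sums, and reindexing $L = \pi(K)$ yields
$$
C^f_\mu(\mathbf{X}^{\rm E}) = \frac{1}{n!} \sum_{\pi \in \mathfrak{S}_n} C^f_{\mu_{\pi^{-1}}}(\mathbf{X}) = \frac{1}{n!} \sum_{\pi \in \mathfrak{S}_n} C^f_{\mu_{\pi}}(\mathbf{X}),
$$
where the second equality uses that $\pi \mapsto \pi^{-1}$ is a bijection on $\mathfrak{S}_n$, together with the convention $\mu_\pi(L) = \mu(\pi(L))$ recorded before the statement.

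To finish, I would invoke the symmetry of $\mathcal{M}$: for every $\mu \in \mathcal{M}$ and every $\pi \in \mathfrak{S}_n$, the measure $\mu_\pi$ again lies in $\mathcal{M}$, so each summand above is bounded by $\sup_{\nu \in \mathcal{M}} C^f_\nu(\mathbf{X})$. Averaging gives $C^f_\mu(\mathbf{X}^{\rm E}) \le \sup_{\nu \in \mathcal{M}} C^f_\nu(\mathbf{X})$ for every $\mu \in \mathcal{M}$, and taking the supremum on the left yields the claim. There is no deep obstacle here; the one point requiring attention is the indexing convention for $\mu_\pi$, which must be tracked correctly in the change of variables so that symmetry of $\mathcal{M}$ is applied to the right family of measures. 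No integrability beyond what makes the right-hand side finite is needed, since the construction only averages the very expectations appearing on the right.
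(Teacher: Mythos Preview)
Your proposal is correct and follows essentially the same route as the paper: construct $\mathbf X^{\rm E}=\mathbf X^{\Pi}$ by a uniform random permutation, verify the structural properties, then average over permutations and use the symmetry of $\mathcal M$ together with the sup-of-average~$\le$~average-of-sup bound. The only cosmetic difference is that you push the permutation into the measure via the identity $C^f_\mu(\mathbf X^\pi)=C^f_{\mu_{\pi^{-1}}}(\mathbf X)$ explicitly, whereas the paper records the equivalent fact $\sup_{\mu\in\mathcal M}C^f_\mu(\mathbf X^\pi)=\sup_{\mu\in\mathcal M}C^f_\mu(\mathbf X)$ and interchanges the supremum with the average directly.
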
 

Proposition \ref{prop:r1-1} illustrates the intuition that among all joint mixes, the NCD ones with correlation matrix $P_n^{*}$  are better choices under uncertainty.
However, this does not answer whether such NCD joint mixes are optimizers to  our main optimal transport problem \eqref{eq:mu_uncertainty1}.
In the next section, we consider the quadratic cost, and show that indeed those NCD joint mixes are solutions to \eqref{eq:mu_uncertainty1} for the quadratic cost.




\subsection{Quadratic cost}

We consider the quadratic cost given by $f(x)=x^2$.  In this case, we denote by 
\begin{equation*}
C_{\mu}^2(X_1,\dots,X_n):=\sum_{K\subseteq [n]} \E\left[\left(\sum_{i\in K} X_i\right)^2\right] \mu(K),\end{equation*}
and \eqref{eq:mu_uncertainty1} becomes, assuming homogeneous marginals,
\begin{equation}\label{eq:mu_uncertainty_quad}
\mbox{to minimize }  \sup_{\mu \in \mathcal M}\sum_{K\subseteq [n]} \E\left[\left(\sum_{i\in K} X_i\right)^2\right]\mu(K),
\mbox{~~~subject to $X_i\sim F$, $i\in[n]$}.
\end{equation} 
Two other formulations related to the quadratic cost, the repulsive harmonic cost problem and the variance minimization problem, are discussed below in Examples \ref{rem:var} and \ref{rem:haromonic:cost}.

 It is clear that, for $\mu \in \mathcal M$, its permutation $\mu_{\pi}$ satisfies 
$
  C_{\mu}^2(\mathbf X) 
=
 C_{\mu_\pi}^2(\mathbf X).
$
    We first show that  the exchangeable NCD joint mix is a minimizer to \eqref{eq:mu_uncertainty_quad} if $\mathcal M$ is symmetric.
\begin{theorem}
    \label{prop:average:var}
Suppose that $F$ is $n$-completely mixable with finite variance and $\mathcal M$ is symmetric. Then, each NCD joint mix  with marginals $F$ and correlation matrix $P_n^{*}$   minimizes \eqref{eq:mu_uncertainty_quad}.
\end{theorem}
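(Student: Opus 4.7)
The plan is to reduce the statement to a simple variance inequality via a symmetrization argument. Let $\sigma^2$ and $\mu_F$ denote the variance and mean of $F$. For any feasible $\mathbf X$ with covariance matrix $\Sigma=(\sigma_{ij})$ (so $\sigma_{ii}=\sigma^2$), expanding the square gives
\begin{equation*}
C_\mu^2(\mathbf X) \;=\; \alpha_\mu \,+\, \sum_{i,j\in[n]} w_{ij}^\mu\, \sigma_{ij},\qquad w_{ij}^\mu:=\mu\bigl(\{K\subseteq[n]:\{i,j\}\subseteq K\}\bigr),
\end{equation*}
where $\alpha_\mu=\mu_F^2\sum_K\mu(K)|K|^2$ depends only on $\mu$ and $\mu_F$, and the entire dependence on $\mathbf X$ sits inside the weighted covariance sum.

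I would then symmetrize over permutations. Since $\mathcal M$ is symmetric, $\mu_\pi\in\mathcal M$ for each $\pi\in\mathfrak S_n$, so
\begin{equation*}
\sup_{\mu'\in\mathcal M}C_{\mu'}^2(\mathbf X)\;\ge\;\frac{1}{n!}\sum_{\pi\in\mathfrak S_n}C_{\mu_\pi}^2(\mathbf X).
\end{equation*}
A short computation using $w_{ij}^{\mu_\pi}=w_{\pi(i)\pi(j)}^{\mu}$ shows that the $\pi$-average of $w_{ij}^{\mu_\pi}$ is a constant $a_\mu$ on the diagonal and a constant $b_\mu\ge0$ off the diagonal, both depending only on the $|K|$-distribution of $\mu$; combined with $\sum_i\sigma_{ii}=n\sigma^2$ and $\sum_{i\ne j}\sigma_{ij}=\var(\sum_{i=1}^n X_i)-n\sigma^2$, this yields
\begin{equation*}
\frac{1}{n!}\sum_{\pi}C_{\mu_\pi}^2(\mathbf X)\;=\;\alpha_\mu+n\sigma^2(a_\mu-b_\mu)+b_\mu\,\var\!\Bigl(\sum_{i=1}^n X_i\Bigr).
\end{equation*}

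The final step is to match this expression with $C_\mu^2(\mathbf X^{\rm E})$ for an exchangeable NCD joint mix $\mathbf X^{\rm E}$ with marginals $F$ and correlation $P_n^{*}$. Existence of such $\mathbf X^{\rm E}$ follows from Proposition~2.1 of \cite{PRWW19} applied to the $n$-completely mixable $F$, together with the observation that any exchangeable joint mix with marginals $F$ is forced to have correlation $P_n^{*}$ by $\var(\sum X_i)=0$ and exchangeability. Plugging $\sigma^{\rm E}_{ii}=\sigma^2$ and $\sigma^{\rm E}_{ij}=-\sigma^2/(n-1)$ into the first display gives $C_\mu^2(\mathbf X^{\rm E})=\alpha_\mu+n\sigma^2(a_\mu-b_\mu)$, which depends only on the $|K|$-distribution of $\mu$ and is therefore invariant under $\mu\mapsto\mu_\pi$. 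Since $b_\mu\ge 0$ and $\var(\sum X_i)\ge 0$, combining the three displays yields the pointwise bound $\sup_{\mu'\in\mathcal M}C_{\mu'}^2(\mathbf X)\ge C_\mu^2(\mathbf X^{\rm E})$ for every $\mu\in\mathcal M$; taking $\sup$ over $\mu$ on the right-hand side finishes the proof, and since any two NCD joint mixes with marginals $F$ and correlation $P_n^{*}$ share the same covariance, they yield the same $C_\mu^2$ and hence the same supremum.

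The step I expect to be the main obstacle is the symmetrization computation: one has to verify that after averaging $w^{\mu_\pi}_{ij}$ over $\mathfrak S_n$, the sum $\sum_{i,j}w^{\mu_\pi}_{ij}\sigma_{ij}$ cleanly decouples into diagonal and off-diagonal pieces and that the off-diagonal coefficient $b_\mu$ is nonnegative. This nonnegativity is exactly what makes $\var(\sum_i X_i)\ge 0$ push the averaged objective upward, and it is what lets the joint mix saturate the bound.
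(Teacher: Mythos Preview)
Your proof is correct. Both your argument and the paper's hinge on the same symmetrization idea, but they are organized dually: the paper permutes the \emph{random vector}, replacing $\mathbf X$ by $\mathbf X^\Pi$ with $\Pi$ uniform on $\mathfrak S_n$, uses $\sup_\mu C_\mu^2(\mathbf X^\Pi)\le \sup_\mu C_\mu^2(\mathbf X)$, and then notes that $\mathbf X^\Pi$ has equicorrelation $\rho$ with the objective monotone in $\rho$, so the minimum sits at $\rho=-1/(n-1)$. You instead permute the \emph{measure}: for fixed $\mathbf X$ and $\mu$, you average $C_{\mu_\pi}^2(\mathbf X)$ over $\pi$ and read off the explicit identity $\frac{1}{n!}\sum_\pi C_{\mu_\pi}^2(\mathbf X)=C_\mu^2(\mathbf X^{\rm E})+b_\mu\var(\sum_i X_i)$. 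The two moves are literally dual because $C_\mu^2(\mathbf X^\pi)=C_{\mu_{\pi^{-1}}}^2(\mathbf X)$, and both collapse to the single inequality $\var(\sum_i X_i)\ge 0$. Your route has the minor advantage of producing a pointwise lower bound for every $\mu$ without passing through the one-parameter family $\mathbf X_\rho$; the paper's route has the advantage that its symmetrization step is stated for general $f$ (Proposition~\ref{prop:r1-1}), with the quadratic structure entering only at the final monotonicity-in-$\rho$ step.
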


 If the marginal distribution $F$ is Gaussian, then we can replace NCD by NA, NSD or NOD in Theorem \ref{prop:average:var}, since for the Gaussian class NCD is equivalent to these notions (Proposition \ref{prop:r1-G-1}).


Theorem \ref{prop:average:var}  does not state that all the minimizers to \eqref{eq:mu_uncertainty_quad} are   NCD joint mixes.
For instance, if $\mathcal M$ contains only measures concentrated on $K$ with $|K|=n$, then any joint mix minimizes \eqref{eq:mu_uncertainty_quad}; see also Remark \ref{rem:kn} below for other similar cases. 
Next, we study the uniqueness of the optimizers for two special choices of $\mathcal M$ in \eqref{eq:uncertainty_f} and \eqref{eq:uncertainty2_f}, namely, 
\begin{equation}\label{eq:uncertainty_quad}
\mbox{to minimize }  \max_{K\subseteq [n]}  \E\left[\left(\sum_{i\in K} X_i\right)^2\right] \mbox{~~~subject to $X_i\sim F$, $i\in[n]$},
\end{equation}
 and for a fixed $k\in [n]$,
\begin{equation}\label{eq:uncertainty2_quad}
\mbox{to minimize } \max_{K\subseteq [n],~|K|=k}   \E\left[\left(\sum_{i\in K} X_i\right)^2\right] \mbox{~~~subject to $X_i\sim F$, $i\in[n]$}.
\end{equation}


 In Theorem \ref{th:opt} below, we will see that, assuming $F$ has mean zero, negative dependence yields more stable costs in the presence of  uncertainty. The exchangeable joint mix with correlation matrix $P^*_n$ minimizes \eqref{eq:uncertainty2_quad}
for each $k\in [n]$, and this correlation matrix is unique for all minimizers for each $k\in [n]\setminus\{1,n-1,n\}$.
As a consequence, all minimizers to \eqref{eq:uncertainty_quad} have the same correlation matrix $P_n^*$ (this holds for $n\ge 3$).



 
\begin{theorem}\label{th:opt}
Suppose that $n\ge 3$ and the distribution $F$ is $n$-completely mixable with mean $0$ and finite positive variance. 
A random vector is a minimizer to 
\eqref{eq:uncertainty_quad}
if and only if 
it is an NCD joint mix with correlation matrix $P_n^{*}$.
The same conclusion holds true  if \eqref{eq:uncertainty_quad} is replaced by
  \eqref{eq:uncertainty2_quad} with any $k\in [n]\setminus\{1,n-1,n\} $.
\end{theorem}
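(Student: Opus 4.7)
After translating $F$ to have mean $0$, we have $\E[S_K^2]=\var(S_K)=k\sigma^2+2a_K$ where $a_K=\sum_{\{i,j\}\subseteq K,\,i<j}\sigma_{ij}$ and $\sigma_{ij}=\cov(X_i,X_j)$. The ``if'' direction is immediate: by Theorem \ref{prop:average:var} an exchangeable NCD joint mix with correlation matrix $P_n^{*}$ achieves the minimax, and since the quadratic cost depends only on the covariance matrix, \emph{every} joint mix with correlation matrix $P_n^{*}$ attains the same value. So the work lies entirely in the ``only if'' direction.

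The starting point is the double-counting identity $\sum_{|K|=k}a_K=\binom{n-2}{k-2}\sum_{i<j}\sigma_{ij}$ (valid for $2\le k\le n-1$), which yields
$$
\mathrm{avg}_{|K|=k}\E[S_K^2]\,=\,k\sigma^2+\frac{2k(k-1)}{n(n-1)}\sum_{i<j}\sigma_{ij}\,\ge\,\frac{k(n-k)\sigma^2}{n-1},
$$
where the inequality uses $\var(S_{[n]})\ge 0$, i.e.\ $\sum_{i<j}\sigma_{ij}\ge -n\sigma^2/2$. Hence $\max_{|K|=k}\E[S_K^2]\ge k(n-k)\sigma^2/(n-1)$. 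For \eqref{eq:uncertainty2_quad} this is the desired lower bound; for \eqref{eq:uncertainty_quad} we maximize in $k$ and pick $k^{\star}=\lceil n/2\rceil$, where $k(n-k)$ is maximized, giving $\lceil n/2\rceil\lfloor n/2\rfloor\sigma^2/(n-1)$. Both bounds are attained by the exchangeable joint mix with correlation matrix $P_n^{*}$, so they are the values of the two minimax problems. At any minimizer the inequalities ``$\max\ge\mathrm{avg}$'' and ``$\mathrm{avg}\ge\mathrm{bound}$'' must be equalities, which forces (i) $\sum_{i<j}\sigma_{ij}=-n\sigma^2/2$, so $\var(S_{[n]})=0$ and $\mathbf X$ is a joint mix; and (ii) $a_K$ is constant over $|K|=k$ (respectively over $|K|=k^{\star}$).

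The main step is the combinatorial claim that constancy of $a_K$ over $|K|=k$ forces the off-diagonal $\sigma_{ij}$ to share a common value, which together with the joint mix condition fixes that value at $-\sigma^2/(n-1)$. Observe that if $L_1,L_2$ are $k$-subsets with $L_2=(L_1\setminus\{l\})\cup\{l'\}$, then $a_{L_2}-a_{L_1}=\sum_{m\in L_1\setminus\{l\}}(\sigma_{l'm}-\sigma_{lm})$, so constancy of $a_K$ is equivalent to
$$
\sum_{m\in L''}\delta_m\,=\,0\qquad\text{for all distinct }l,l'\in[n]\text{ and all } L''\subseteq[n]\setminus\{l,l'\}\text{ with }|L''|=k-1,
$$
where $\delta_m:=\sigma_{lm}-\sigma_{l'm}$. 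For $k=2$ this reads $\delta_m=0$ for every $m\in[n]\setminus\{l,l'\}$ directly. For $k\ge 3$, swapping one element in $L''$ at a time shows that $\delta_m$ is constant across $m\in[n]\setminus\{l,l'\}$ provided $n-2>k-1$, i.e.\ $k\le n-2$; then $(k-1)d=0$ forces the common value $d$ to be $0$. In either case $\sigma_{lm}=\sigma_{l'm}$ for every triple of distinct indices $l,l',m$, and fixing one index and varying the others yields a single off-diagonal value.

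The hypotheses match the stated ranges: \eqref{eq:uncertainty2_quad} requires $2\le k\le n-2$, i.e.\ $k\in[n]\setminus\{1,n-1,n\}$; for \eqref{eq:uncertainty_quad} we apply the step at $k^{\star}=\lceil n/2\rceil$, which lies in $\{3,\dots,n-2\}$ when $n\ge 5$ and reduces to the direct $k=2$ sub-case (valid for any $n\ge 3$) when $n\in\{3,4\}$. I expect the combinatorial step to be the most delicate part, especially verifying the boundary cases $k=2$ and $n=3$ cleanly; the averaging inequality and the joint-mix deduction are short once the double-counting identity is in hand.
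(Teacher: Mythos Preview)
Your argument is correct and follows the paper's averaging strategy: both derive the lower bound $k(n-k)\sigma^2/(n-1)$ via double counting, note it is attained by the exchangeable joint mix with correlation matrix $P_n^{*}$, and then observe that at any minimizer the chain $\max\ge\text{avg}\ge\text{bound}$ collapses to equalities, yielding both the joint-mix condition and the constancy of $a_K$ over $|K|=k$.

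The difference is in the final extraction of $\Sigma=P_n^{*}$ from that constancy. After fixing $k$ and obtaining $\var(S_K)=k(n-k)\sigma^2/(n-1)$ for all $|K|=k$, the paper writes ``Take $k=2$'' and reads off $\sigma_{ij}=-\sigma^2/(n-1)$ directly; as written this only settles the case $k=2$ of \eqref{eq:uncertainty2_quad}, and the passage to general $k\ge 3$ is left implicit. Your swap argument---computing $a_{L_2}-a_{L_1}=\sum_{m\in L''}(\sigma_{l'm}-\sigma_{lm})$ for a single-element replacement $l\to l'$, then varying one element of $L''$ (possible precisely when $k\le n-2$) to force all $\delta_m$ equal and hence zero---is exactly the combinatorial lemma that makes the step ``constancy of $a_K$ over $|K|=k$ forces equal off-diagonal covariances'' rigorous for every $k\in\{2,\dots,n-2\}$. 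For \eqref{eq:uncertainty_quad} both proofs localize to the size $k^\star$ at which $k(n-k)$ is maximal; you correctly check $k^\star\in\{2,\dots,n-2\}$ for all $n\ge 3$ (with $k^\star=2$ handled by the direct sub-case when $n\in\{3,4\}$). So your proof not only matches the paper's route but supplies the missing link for $k\ge 3$.
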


\begin{remark}\label{rem:kn}
We briefly comment on the three cases of $k$ excluded from the statement regarding the unique minimizer of \eqref{eq:uncertainty2_quad}, and it will be clear that uniqueness cannot be expected in these cases. Recall that  the marginal distributions of $\mathbf X$ are assumed identical.
\begin{enumerate}
\item If $k=1$,  then $\E [ (\sum_{i\in K} X_i )^2 ]=\E [X_1^2 ]$
which does not depend on the dependence structure of $\mathbf X$, and hence any coupling minimizes \eqref{eq:uncertainty2_quad}.
\item If $k=n$, then  $K=[n]$ and thus any joint mix  minimizes \eqref{eq:uncertainty2_quad}.
\item If $k=n-1$, then  $\E [(\sum_{i\in K} X_i )^2 ]=\E [(c-X_1)^2 ] $  
for any joint mix $\mathbf X$ with center $c$.
Hence, any joint mix
has the same  value for  \eqref{eq:uncertainty2_quad}.
 \end{enumerate}
 \end{remark}

Theorem \ref{th:opt} implies that
for a standard Gaussian $F$, the exchangeable joint mix $\mathbf X^{\rm E}\sim \mathrm N_{n}(\mathbf 0_n, P_n^{*})$ is a minimizer to both \eqref{eq:uncertainty_quad} and \eqref{eq:uncertainty2_quad} for each $k\in [n]$.
 If $n\ge 3$, this minimizer is unique among Gaussian vectors in both cases of \eqref{eq:uncertainty_quad}  and  \eqref{eq:uncertainty2_quad} with $k\in [n]\setminus\{1,n-1,n\} $.
 \begin{remark}
 As we have seen in Remark \ref{rem:kn}, if  $n=3$, then
any joint mix minimizes \eqref{eq:uncertainty2_quad} for each $k\in [n]$.
The uniqueness statement in  Theorem \ref{th:opt}
implies that the covariance structure of a joint mix is unique for $n=3$, as we see in Example \ref{ex:nodgaussian}.
\end{remark}

Below we discuss two specific optimal transport problems related to the quadratic cost. 
\begin{example}[Variance minimization]\label{rem:var}
The quadratic cost minimization  problem  is  equivalent to variance minimization with given marginals.  It is clear that 
$$C^2_\mu(X_1, \dots, X_n)=\sum_{K\subseteq [n]} \var\left(\sum_{i\in K} X_i\right)\mu(K)+ \sum_{K\subseteq [n]} \left(\sum_{i\in K}\E[ X_i]\right)^2\mu(K),$$
and the second term does not depend on the dependence structure of $(X_1, \dots, X_n)$. If $F_1,\dots,F_n$ have zero mean, then the problem \eqref{eq:mu_uncertainty_quad} can be written as
\begin{equation*}
\mbox{to minimize } \sup_{\mu \in \mathcal M}\sum_{K\subseteq [n]} \var\left(\sum_{i\in K} X_i\right)\mu(K)\mbox{~~~subject to $X_i\sim F$, $i\in[n]$}.
\end{equation*}  
Variance minimization is a classic problem in Monte Carlo simulation~\citep{CM01,CM05} and risk management~\citep{R13}.  
The above arguments show that the statements in Theorems \ref{prop:average:var} and  \ref{th:opt} hold true if 
the objective of quadratic cost $\E[(\sum_{i\in K} X_i)^2]$ is replaced by the variance   $\var(\sum_{i\in K} X_i)$. 

\end{example}

\begin{example}[Repulsive harmonic cost]\label{rem:haromonic:cost}
The \emph{repulsive harmonic cost} function is defined by
$$
c(x_1,\dots,x_n)= -\sum_{i,j=1}^n (x_i-x_j)^2,~~~~~(x_1,\dots,x_n)\in \R^n.
$$
This cost function originates from the so-called weak interaction regime in Quantum Mechanics; see e.g., \cite{DGN17}.
Any joint mix minimizes the expected repulsive harmonic cost. To see this, we can rewrite
\begin{align}\label{eq:variance}
\E[c(X_1,\dots,X_n)] &= -2n\sum_{i=1}^n \E[X_i^2] + 2 \E\left[\left(\sum_{i=1}^n X_i\right)^2\right]  
.
\end{align}
Since the first  terms on the right-hand side of \eqref{eq:variance} do not depend on the dependence structure of $(X_1,\dots,X_n)$, minimizing $ \E[c(X_1,\dots,X_n)] $ is equivalent to minimizing $\E [ (\sum_{i=1}^n X_i )^2 ] $,  
which is clearly minimized if $(X_1,\dots,X_n)$ is a joint mix. 
Let $c_K(x_1,\dots,x_n)=-\sum_{i,j\in K}(x_i-x_j)^2$, $(x_1,\dots,x_n)\in \R^n$, for $K\subseteq [n]$.
The  problem \eqref{eq:mu_uncertainty_quad} can be written as
\begin{equation*}
\mbox{to minimize } \sup_{\mu \in \mathcal M}\sum_{K\subseteq [n]}\left( \frac 12 \E[c_K(\mathbf X)] + |K| \sum_{i\in K}\E[X_i^2] \right) \mu(K)\mbox{~~~subject to $X_i\sim F$, $i\in[n]$}.
\end{equation*} 
The statement in Theorem \ref{prop:average:var}  remains true if 
the objective of quadratic cost $\E[(\sum_{i\in K} X_i)^2]$ is replaced by the cost  $\E[c_K(\mathbf X)]$.  
\end{example}



\subsection{Discussions on heterogeneous marginals}
\label{sec:heter}

In  Theorem \ref{th:opt}, we assumed that the marginal distributions are identical. This assumption is not  dispensable, as the situation for heterogeneous marginals is drastically different and we do not have general results.  
In this section, we present a result in the simple case $n=3$ and provide several examples to discuss some subtle issues and open questions. 
To illustrate these issues, 
we focus on the problems
\eqref{eq:uncertainty_quad} and \eqref{eq:uncertainty2_quad} for $n=3$.
In all examples, we explain with Gaussian marginal distributions, but this assumption can be replaced as long as the covariance matrices in the examples are compatible with the marginals.

\begin{proposition}\label{prop:opt-n3}
Let $n=3$.
For any tuple of marginal distributions with finite variance vector and zero means, any joint mix, if it exists, minimizes \eqref{eq:uncertainty_quad}.  If an NCD joint mix exists, then no random vector with any positive bivariate covariance can minimize  \eqref{eq:uncertainty_quad}.
\end{proposition}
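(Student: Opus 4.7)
The idea is to evaluate the objective $\max_{K \subseteq [3]} \E[(\sum_{i \in K} X_i)^2]$ exactly when $\mathbf{X}$ is a joint mix with zero means, and then match it to a universal lower bound. Since $X_1+X_2+X_3 = 0$ almost surely, for any two-element subset $\{i,j\} \subset [3]$ with complement $\{k\}$ one has $X_i + X_j = -X_k$, and hence $\E[(X_i+X_j)^2] = \sigma_k^2$. Singleton subsets contribute $\sigma_i^2$, and the empty and full sets contribute $0$. Thus the objective equals $\max_{i \in [3]} \sigma_i^2$. On the other hand, for any random vector $\mathbf{Y}$ with the prescribed marginals,
\begin{equation*}
\max_{K \subseteq [3]} \E\left[\left(\sum_{i \in K} Y_i\right)^2\right] \;\ge\; \max_{i \in [3]} \E[Y_i^2] \;=\; \max_{i \in [3]} \sigma_i^2,
\end{equation*}
since the marginal second moments are fixed by the marginals (using zero means). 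Hence any joint mix attains this lower bound and solves \eqref{eq:uncertainty_quad}.

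\textbf{Plan for the second claim.} Suppose an NCD joint mix exists with the prescribed marginals, and let $\mathbf{Y}$ be a candidate minimizer with $\cov(Y_i, Y_j) > 0$ for some pair $i \neq j$; without loss of generality $(i,j) = (1,2)$. I would show the pair contribution alone exceeds $\max_i \sigma_i^2$. Expanding, $\E[(Y_1+Y_2)^2] = \sigma_1^2 + \sigma_2^2 + 2\cov(Y_1, Y_2) > \sigma_1^2 + \sigma_2^2$. By Proposition~\ref{prop:necessary}, the existence of an NCD joint mix with variance vector $(\sigma_1^2,\sigma_2^2,\sigma_3^2)$ forces $2\max_i \sigma_i^2 \le \sum_i \sigma_i^2$, which in particular yields $\sigma_3^2 \le \sigma_1^2 + \sigma_2^2$; together with the trivial bounds $\sigma_1^2 + \sigma_2^2 \ge \max(\sigma_1^2, \sigma_2^2)$, this gives $\sigma_1^2 + \sigma_2^2 \ge \max_i \sigma_i^2$. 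Therefore $\E[(Y_1+Y_2)^2] > \max_i \sigma_i^2$, which by the first claim equals the minimum value of \eqref{eq:uncertainty_quad}; consequently $\mathbf{Y}$ is not a minimizer.

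\textbf{Where the work is.} There is no serious obstacle; the whole argument hinges on the elementary identity $X_i + X_j = -X_k$, available only because $n=3$ and the center is zero, which collapses the max-over-subsets objective onto the marginal variances. The only minor care needed is that if some $\sigma_i = 0$ then a positive covariance cannot involve $Y_i$, so the pair singled out is automatically nondegenerate. This route is genuinely restricted to $n=3$: for $n \ge 4$ the pairwise sums no longer reduce to negatives of single marginals, which is consistent with the drastically different behavior for heterogeneous marginals flagged in Section~\ref{sec:heter}.
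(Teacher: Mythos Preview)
Your proposal is correct and follows essentially the same route as the paper: you establish the lower bound $\max_i \sigma_i^2$ from the singleton subsets, use the identity $X_i+X_j=-X_k$ (valid because the zero-mean assumption forces the center to be $0$) to show a joint mix attains it, and then for the second claim combine $\E[(Y_i+Y_j)^2]>\sigma_i^2+\sigma_j^2$ with the necessary condition \eqref{eq:maxcond} from Proposition~\ref{prop:necessary} exactly as the paper does. Your aside about degenerate components is a harmless extra check that the paper omits.
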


\begin{remark}\label{rem:conjecture}
Proposition \ref{prop:opt-n3} states that, if a Gaussian triplet supports an NCD joint mix, then it minimizes \eqref{eq:uncertainty_quad}, and all Gaussian minimizers must be NCD.
It is not clear whether this observation can be extended to $n\ge 4$.
\end{remark}

Unlike the situation in Theorem \ref{th:opt}, uniqueness of the covariance matrix does not hold in the setting of Proposition \ref{prop:opt-n3},
as illustrated in the following example.
\begin{example}
Consider  Gaussian marginal distributions  with variance vector $(\sigma_1^2,\sigma_2^2,\sigma_3^2)=(2,1,1)$. In this case, \eqref{eq:maxcond}  holds, and an NCD joint mix exists by Theorem~\ref{th:nodfull}.
Both the covariance matrices $\Sigma$ and $\Sigma'$ defined by
 $$
\Sigma =\left(
           \begin{array}{ccc}
             2 &  -1 & -1 \\
              -1 & 1 & 0 \\
              -1 & 0 & 1\\
           \end{array}
         \right)
 \mbox{~~~and~~~}
\Sigma' =\left(
           \begin{array}{ccc}
             2 &  -1/2 & -1 \\
              -1/2 & 1 & 0 \\
              -1 & 0 & 1\\
           \end{array}
         \right)
 $$
 minimize \eqref{eq:uncertainty_quad} subject to the marginal distributions.
We can see that $\Sigma$ corresponds to an NCD joint mix, whereas   $\Sigma'$ corresponds to an NCD  random vector, but not a joint mix.
\end{example}

The next example illustrates that, although a joint mix generally minimizes \eqref{eq:uncertainty_quad} in case $n=3$, NCD may be more relevant than joint mixes for minimizing \eqref{eq:uncertainty2_quad} with some   $k\ne n$ when the two dependence requirements cannot be simultaneously achieved.

\begin{example}
\label{ex:9}
Consider Gaussian marginal distributions  with variance vector $(\sigma_1^2,\sigma_2^2,\sigma_3^2)=(4,1,1)$. In this case, \eqref{eq:maxcond} does not hold, and no NCD joint mix exists. 
Both the covariance matrices $\Sigma$ and $\Sigma'$ defined by
 $$
\Sigma =\left(
           \begin{array}{ccc}
             4 &  -2 & -2 \\
              -2 & 1 & 1 \\
              -2 & 1 & 1\\
           \end{array}
         \right)
 \mbox{~~~and~~~}
\Sigma' =\left(
           \begin{array}{ccc}
            4 &  -1 & -1 \\
              -1 & 1 & 0 \\
              -1 & 0 & 1\\
           \end{array}
         \right)
 $$
  minimize \eqref{eq:uncertainty_quad} subject to the marginal distributions.
The covariance matrix $\Sigma$ corresponds to a joint mix, but not NCD. The covariance matrix $\Sigma'$ corresponds to an NCD  random vector, but not a joint mix.
Thus, the problem \eqref{eq:uncertainty_quad} admits an  NCD minimizing distribution $\mathrm N_3(\mathbf 0_3,\Sigma')$.
Moreover, for \eqref{eq:uncertainty2_quad} with $k=2$, the NCD distribution $\mathrm N_3(\mathbf 0_3,\Sigma')$ has a maximum of $3$ which
is strictly better than the joint mix distribution $\mathrm N_3(\mathbf 0_3,\Sigma)$ with a maximum of $4$.
\end{example}

Example~\ref{ex:9} suggests, informally, that there is a trade-off between a joint mix and NCD when both cannot be attained simultaneously, with a joint mix minimizing \eqref{eq:uncertainty2_quad} for $k=n$,
and an NCD random vector improving \eqref{eq:uncertainty2_quad} from the case of a joint mix for some $1<k<n$.
In fact, \eqref{eq:uncertainty2_quad} is not always minimized by NCD random vectors as seen in the following example.

\begin{example}\label{ex:not:nod:opt}
Consider   marginal distributions with variance vector $(\sigma_1^2,\sigma_2^2,\sigma_3^2)=(\sigma^2,1,1)$ and zero
means, where $\sigma>3$.
For any $(X_1,X_2,X_3)$ with the given marginals, we have
\begin{align*}
 \E\left[(X_2+X_3)^2\right]& \le 4 < (\sigma-1)^2   \leq \min\left(\E\left[(X_1+X_2)^2\right],\E\left[(X_1+X_3)^2\right]\right),
 \end{align*}
and hence 
\begin{equation}\label{eq:not:nod:opt}
\max_{K\subseteq[3],~|K|=2}\E\left[\left(\sum_{i\in K}X_i\right)^2\right] = \sigma^2+1+2\sigma \max(\rho_{12},\rho_{13}),
 \end{equation}
where $\rho_{ij}$, $i,j\in[3]$ is the correlation coefficient of $(X_i,X_j)$. 
For Gaussian marginals, 
the minimum of~\eqref{eq:not:nod:opt} is attained if and only if $\rho_{12}=\rho_{13}=-1$.
In this case, $\rho_{23}=1$ is the only possible correlation, and thus the minimizer to~\eqref{eq:not:nod:opt} cannot be NCD.
\end{example}

On the other hand, the next example shows that, if $n=3$, there always exists an NCD minimizer to \eqref{eq:uncertainty_quad} for Gaussian marginals.

\begin{example}~\label{ex:nod:min}
Let $(X_1,X_2,X_3)$ follow 
a multivariate Gaussian distribution with 
equicorrelation matrix $P_3^\ast$; i.e., all pairwise correlation coefficients are $-1/2$.
The variances $\sigma_1^2$,  $\sigma_2^2$ and  $\sigma_3^2$ are assumed to satisfy $\sigma_1\leq \sigma_2\leq \sigma_3$ without the loss of generality and the means for prescribed marginal distributions are assumed to be zero. We can easily verify that
each of $\E\left[(X_1+X_2+X_3)^2\right]$ and $\E\left[(X_i+X_j)^2\right]$, $i,j\in [3]$, is smaller than or equal to $\sigma_3^2$.
Hence, $(X_1,X_2,X_3)$ attains the lower bound
$$
\max_{K\subseteq[3]}\E\left[\left(\sum_{i\in K}X_i\right)^2\right] = \sigma_3^2 = \max_{i \in [3]}\sigma_i^2,
$$
and thus it minimizes \eqref{eq:uncertainty_quad}.
\end{example}

\begin{remark}\label{rem:conj-2}
For $n\geq 4$, it is not clear whether there always exists an NCD minimizer to \eqref{eq:uncertainty_quad} under a general heterogeneous  marginal constraint.
\end{remark}




\section{Elliptical distributions}\label{sec:elliptical}


Elliptical distributions form a tractable class of joint mixes for arbitrary dimensions.
In this section, we investigate negative dependence properties of such elliptical joint mixes.

An $n$-dimensional \emph{elliptical distribution} is a family of multivariate distributions
defined through the characteristic function
\begin{align}\label{eq:ellip}
\phi_{ \mathbf X}(\mathbf t)=\E\left[\exp\left({\mathrm i}\mathbf t^\top  \mathbf X \right)\right]=\exp\left({\mathrm i} \mathbf t^\top \boldsymbol \mu  \right)\psi(\mathbf t^\top \Sigma\mathbf t),~~~~\mathbf t\in \R^n,
\end{align}
for some location parameter $\boldsymbol{\mu}\in \mathbb R^n$, $n\times n$ positive semi-definite symmetric matrix $\Sigma\in \mathbb R^{n\times n}$ and the so-called \emph{characteristic generator} $\psi:\R_+\to \R$, where $\R_+=\{x\in \R: x\geq 0\}$. See Section~6 of~\cite{MFE15} for more properties.
We denote an elliptical distribution by $\operatorname{E}_n(\boldsymbol{\mu}, \Sigma,\psi)$ and refer to $\boldsymbol{\mu}$ as the \emph{location vector} and $\Sigma$ the \emph{dispersion matrix}.
We say that an elliptical distribution $\mathrm E_n(\boldsymbol \mu, \Sigma,\psi)$ is non-degenerate if all its marginals are non-degenerate (i.e., not a point-mass). Equivalently, the diagonal entries of $\Sigma$ are positive.
As presented in Proposition~6.27 of~\cite{MFE15}, a random vector $\mathbf X\sim \operatorname{E}_n(\boldsymbol{\mu}, \Sigma,\psi)$ with $\operatorname{rank}(\Sigma)=k$ admits the stochastic representation $\mathbf X = \boldsymbol{\mu} + RA\mathbf S$, where $\mathbf S$ is the uniform distribution on the unit sphere on $\mathbb R^k$, the radial random variable $R\geq 0$ is  independent of $\mathbf S$, and $A \in \mathbb R^{n\times k}$ is such that $A A^\top=\Sigma$.
With this representation, we have that $\mathbb E[\mathbf X]=\boldsymbol \mu$ and $\cov(\mathbf X)={\mathbb E}[R^2]\,\Sigma/k$ provided $\E[R^2]<\infty$.

We first present a simple lemma on elliptical joint mixes which will be useful for later discussions.\begin{lemma}\label{lem:simple}
An $n$-dimensional elliptically distributed random vector $\mathbf X \sim \operatorname{E}_n(\boldsymbol{\mu}, \Sigma,\psi)$ is a joint mix if and only if $\mathbf{1}_n^\top\Sigma\mathbf{1}_n=0$ or $\psi =1$ on $\R_+$.
\end{lemma}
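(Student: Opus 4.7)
The plan is to work directly with the characteristic function in \eqref{eq:ellip} and exploit the fact that $\mathbf X$ is a joint mix if and only if the linear functional $S:=\mathbf 1_n^\top \mathbf X$ is almost surely constant, equivalently $|\phi_S(s)|=1$ for every $s\in\R$. Writing $a=\mathbf 1_n^\top\Sigma\mathbf 1_n\ge 0$, specialising \eqref{eq:ellip} to $\mathbf t=s\mathbf 1_n$ gives
$$
\phi_S(s)=\phi_{\mathbf X}(s\mathbf 1_n)=\exp\!\bigl(\mathrm i s\,\mathbf 1_n^\top\boldsymbol\mu\bigr)\,\psi(s^2 a),\qquad s\in\R,
$$
so the whole argument reduces to an analysis of the single real-valued function $\psi$ on $\R_+$.

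For sufficiency I would handle each alternative separately. If $a=0$, then $\psi(s^2 a)=\psi(0)=1$ (noting $\psi(0)=1$ because $\phi_{\mathbf X}(\mathbf 0)=1$), so $\phi_S(s)=\exp(\mathrm i s\,\mathbf 1_n^\top\boldsymbol\mu)$ and $S=\mathbf 1_n^\top\boldsymbol\mu$ a.s. If $\psi\equiv 1$ on $\R_+$, then $\phi_{\mathbf X}(\mathbf t)=\exp(\mathrm i\mathbf t^\top\boldsymbol\mu)$ identically, so $\mathbf X=\boldsymbol\mu$ a.s.\ and trivially $S$ is constant.

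For necessity, assume $S=c$ a.s. Then $|\phi_S(s)|=1$ for all $s$, which forces $|\psi(s^2 a)|=1$ for all $s\in\R$. If $a=0$ we are done, so assume $a>0$. As $s$ ranges over $\R$, $s^2 a$ ranges over all of $\R_+$, hence $|\psi(t)|=1$ for every $t\in\R_+$. Since $\psi$ is real-valued by the definition recalled in \eqref{eq:ellip}, this means $\psi(t)\in\{-1,+1\}$ for every $t\ge 0$. The remaining step, which is the one I would treat as the main obstacle, is to rule out the value $-1$. For this I would use continuity: $\phi_{\mathbf X}$ is continuous, so the map $s\mapsto \psi(s^2 a)$ is continuous on $\R$, which (since $a>0$) entails that $\psi$ is continuous on $\R_+$. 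A continuous function into the two-point set $\{-1,+1\}$ taking the value $1$ at $0$ must be identically $1$, yielding $\psi\equiv 1$ on $\R_+$.

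The only point requiring any real care is the last one, because $\psi$ is not itself a characteristic function and its properties are inherited only indirectly from $\phi_{\mathbf X}$; the argument above shows that continuity on $\R_+$ is nevertheless automatic whenever $a>0$, which is precisely the case where we need it. The equivalence stated in the lemma then follows.
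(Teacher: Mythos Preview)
Your proof is correct and follows essentially the same route as the paper's: both specialise the characteristic function \eqref{eq:ellip} to $\mathbf t=s\mathbf 1_n$ to identify the law of $S=\mathbf 1_n^\top\mathbf X$ as $\mathrm E_1(\mathbf 1_n^\top\boldsymbol\mu,\,\mathbf 1_n^\top\Sigma\mathbf 1_n,\,\psi)$ and then argue that this one-dimensional elliptical law is degenerate precisely when $\mathbf 1_n^\top\Sigma\mathbf 1_n=0$ or $\psi\equiv 1$. The paper's proof is terse and simply asserts this last equivalence, whereas you supply the missing detail via the continuity argument ruling out $\psi=-1$; this is a genuine improvement in rigor but not a different method.
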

Negative dependence of such an elliptical joint mix is the main topic of this section.
We next provide a characterization for NCD-JM as an extension to Theorem~\ref{th:nodfull}.

\begin{proposition}
\label{prop:R1-el} 
Suppose that  $\psi$ is the characteristic generator of an $n$-dimensional elliptical distribution.
A tuple of  univariate  distributions $(\mathrm{E}_1(\mu_i, \sigma_i^2, \psi),i\in [n])$ 
supports an NCD  joint mix if and only if \eqref{eq:maxcond} holds, that is,  $2 \max_{i\in [n]}\sigma_i^2 \le \sum_{i\in [n]}\sigma_i^2$.
Moreover, such an NCD joint mix can be chosen to follow an elliptical distribution.
\end{proposition}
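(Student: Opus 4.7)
The plan is to reduce to the Gaussian case of Theorem~\ref{th:nodfull} by exploiting the fact that the dispersion matrix of an elliptical distribution can be any positive semi-definite matrix with the prescribed diagonal, independently of the generator $\psi$. From the definition \eqref{eq:ellip} (setting $\mathbf t = t\mathbf e_i$) the univariate marginals of $\mathrm{E}_n(\boldsymbol{\mu}, \Sigma, \psi)$ are $\mathrm{E}_1(\mu_i, \Sigma_{ii}, \psi)$, so any valid PSD matrix with diagonal $(\sigma_1^2,\dots,\sigma_n^2)$ will automatically produce the prescribed marginal family regardless of $\psi$.

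For necessity, suppose $(\mathrm{E}_1(\mu_i, \sigma_i^2, \psi))_{i \in [n]}$ supports an NCD joint mix $\mathbf X$. Since NCD requires $\cov(X_i,X_j)\le 0$ for $i\ne j$, the marginal second moments must be finite; a short characteristic-function computation gives $\var(X_i) = c\sigma_i^2$ with $c = -2\psi'(0) > 0$ independent of $i$. Applying Proposition~\ref{prop:necessary} to the variance vector $(c\sigma_1^2,\dots,c\sigma_n^2)$ and cancelling $c$ yields \eqref{eq:maxcond}.

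For sufficiency, assume \eqref{eq:maxcond}. By Theorem~\ref{th:nodfull} there exists a positive semi-definite matrix $\Sigma^{\ast}$ with diagonal $(\sigma_1^2,\dots,\sigma_n^2)$, with $\Sigma^{\ast}_{ij} \le 0$ for $i \ne j$, and with $\mathbf 1_n^\top \Sigma^{\ast} \mathbf 1_n = 0$; this is precisely the covariance matrix of the Gaussian NCD joint mix produced there. Set $\mathbf X \sim \mathrm{E}_n(\boldsymbol{\mu}, \Sigma^{\ast}, \psi)$ with $\boldsymbol{\mu} = (\mu_1,\dots,\mu_n)$. Then $X_i \sim \mathrm{E}_1(\mu_i, \sigma_i^2, \psi)$, $\mathbf X$ is a joint mix by Lemma~\ref{lem:simple} since $\mathbf 1_n^\top \Sigma^{\ast} \mathbf 1_n = 0$, and the stochastic representation $\mathbf X = \boldsymbol{\mu} + R A \mathbf S$ gives $\cov(X_i,X_j) = (\E[R^2]/k)\Sigma^{\ast}_{ij} \le 0$ for $i \ne j$, proving NCD. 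The whole argument is essentially transport of structure; the only delicate point is confirming, on the necessity side, that $\sigma_i^2$ is proportional to the actual variance of the $i$-th marginal, which is automatic from the implicit finiteness of $\psi'(0)$ under the NCD assumption.
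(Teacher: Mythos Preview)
Your proposal is correct and follows essentially the same route as the paper: necessity via Proposition~\ref{prop:necessary} (noting that the marginal variances are a common positive multiple of the $\sigma_i^2$), and sufficiency by importing the dispersion matrix $\Sigma$ from the Gaussian construction in Theorem~\ref{th:nodfull} into $\mathrm{E}_n(\boldsymbol{\mu},\Sigma,\psi)$, then reading off the marginals, the joint-mix property from $\mathbf 1_n^\top \Sigma \mathbf 1_n=0$ (Lemma~\ref{lem:simple}), and NCD from the nonpositive off-diagonal entries. Your write-up is in fact slightly more explicit than the paper's, which simply cites Proposition~\ref{prop:necessary} and Theorem~\ref{th:nodfull} without spelling out the proportionality constant or the covariance computation.
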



As we showed in Proposition \ref{prop:r1-G-1}, for Gaussian random vectors, NA, NSD and NOD are all equivalent to NCD; that is, the  bivariate correlations are non-positive. 
 Recall that $P_n^{*}$ is an $n\times n$ matrix whose diagonal entries are $1$ and off-diagonal entries are $-1/(n-1)$.
Together with Lemma~\ref{lem:simple}, the matrix $P_n^{*}$ is the only choice of $\Sigma$ with diagonal entries being $1$ such that $\mathbf X\sim \mathrm{N}_n(\boldsymbol \mu,\Sigma)$ is an exchangeable NA (and thus NSD and NOD) joint mix.

One may hope that non-Gaussian elliptical distributions can represent NOD, NSD and NA joint mixes for $n\ge 3$.
The following result states that Gaussian family is characterized as the only   elliptical family which admits such a negatively dependent $n$-joint mix for all $n$.
For a characteristic generator $\psi$,
denote by $\mathcal {E} (\psi)$  the class of all non-degenerate random vectors following an elliptical distribution with characteristic generator $\psi$.
In what follows, a class $\mathcal E(\psi)$ is a \emph{Gaussian variance mixture family} 
if there exists a nonnegative random variable $W$  such that each member $\mathbf X$ admits the stochastic representation $\mathbf X \laweq  \boldsymbol{\mu} + \sqrt{W}A\mathbf Z$, where $\boldsymbol{\mu}\in \mathbb{R}^n$, $A\in \mathbb{R}^{n\times k}$, and $\mathbf{Z}$ is a $k$-dimensional standard Gaussian independent of $W$.



\begin{theorem}
\label{th:impossible}
%
%
Let $\psi$ be a characteristic generator.
\begin{enumerate}[(i)]
\item\label{item:ncd} The class $\mathcal {E} (\psi)$  contains an NCD $n$-joint mix for all $n\ge 2$
if and only if the class $\mathcal {E} (\psi)$ is a Gaussian variance mixture family.
\item\label{item:nod} The class $\mathcal {E} (\psi)$  contains an NOD, NSD, or NA $n$-joint mix for all $n\ge 2$
if and only if the class $\mathcal {E} (\psi)$ is Gaussian.
\end{enumerate}
\end{theorem}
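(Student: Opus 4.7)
The plan is to prove each of the two iff statements via its two directions; the forward (``$\Leftarrow$'') implications follow quickly from earlier results while the substance lies in the two reverse directions. For the equivalences in (ii) it suffices to show Gaussian $\Rightarrow$ NA joint mix for every $n$ and NOD joint mix for every $n$ $\Rightarrow$ Gaussian, exploiting the chain NA $\Rightarrow$ NSD $\Rightarrow$ NOD.

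For ``$\Leftarrow$'' in (i), given a Gaussian-variance-mixture generator $\psi(s)=\int_0^\infty e^{-sw/2}\,dG(w)$, I take $\mathbf X=\sqrt W A\mathbf Z$ with $W\sim G$ independent of a standard Gaussian $\mathbf Z$ and $AA^\top$ a positive multiple of $nI_n-\mathbf 1_n\mathbf 1_n^\top$: Lemma~\ref{lem:simple} yields JM, and the off-diagonal covariances $\E[W]\sigma^2(-1/(n-1))$ yield NCD. For ``$\Leftarrow$'' in (ii), Theorem~\ref{th:nodfull} provides a Gaussian NA joint mix in every dimension with equal variances. For ``$\Rightarrow$'' in (i), the hypothesis forces $\psi$ to be the characteristic generator of a non-degenerate elliptical distribution in every dimension $n\ge 2$; Schoenberg's classical characterization of radial positive-definite functions then gives the integral representation $\psi(s)=\int_0^\infty e^{-sw/2}\,dG(w)$, which is exactly the Gaussian variance mixture condition.

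For ``$\Rightarrow$'' in (ii), part~(i) first reduces the situation to $\mathcal E(\psi)$ being a Gaussian variance mixture family, $\mathbf X\laweq\sqrt W\mathbf Y$ with $\mathbf Y$ Gaussian and $W\sim G$, and I must force $G$ to be a point mass. Suppose not. The key lemma is that for any non-degenerate $W$ there exists $\rho_0=\rho_0(W)>0$ such that the bivariate Gaussian variance mixture $(X_1,X_2)=\sqrt W(Y_1,Y_2)$ with $\mathrm{Corr}(Y_1,Y_2)=\rho$ fails NUOD whenever $\rho\in(-\rho_0,\rho_0)$. The case $\rho=0$ is a one-line Jensen computation: for any $t\ne 0$,
\[
\p(X_1>t)\p(X_2>t)-\p(X_1>t,X_2>t)=\E[\bar\Phi(t/\sqrt W)]^2-\E[\bar\Phi(t/\sqrt W)^2]=-\var(\bar\Phi(t/\sqrt W))<0,
\]
so NUOD fails strictly at $(t,t)$; continuity of $\E[\bar\Phi_\rho(t/\sqrt W,t/\sqrt W)]$ in $\rho$ extends the failure to a neighborhood of $0$. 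Choosing $n$ large enough that $1/(n-1)<\rho_0$ and invoking Proposition~\ref{prop:non:exchangeable:to:exchangeable:nod} to restrict to exchangeable joint mixes with identical marginals, the forced pairwise correlation $-1/(n-1)$ lies in $(-\rho_0,0)$, so every pairwise marginal fails NUOD and the joint mix cannot be NOD---the desired contradiction.

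The main obstacle is extending this contradiction to the heterogeneous-marginals case, where the pairwise correlations $\rho_{ij}$ are unequal and Proposition~\ref{prop:non:exchangeable:to:exchangeable:nod} does not apply directly. One must still exhibit at least one pair $(i,j)$ whose correlation lands in the NUOD-failure window $(-\rho_0,\rho_0)$. The plan is to combine the JM identity $\sum_{i<j}\sigma_i\sigma_j\rho_{ij}=-\sum_i\sigma_i^2/2$ with the admissibility bound $2\max_i\sigma_i\le\sum_i\sigma_i$ from~\eqref{eq:maxcond2}: a short manipulation shows that requiring all $\rho_{ij}\le-\rho_0$ would force $(\sum_i\sigma_i)^2/\sum_i\sigma_i^2\le(1+\rho_0)/\rho_0$, and I expect that for $n$ sufficiently large this bound becomes incompatible with positive-semidefiniteness of the covariance matrix together with non-degeneracy of the marginals. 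The delicate balancing of these quantitative constraints near the JM admissibility boundary is where I anticipate the trickiest step of the argument.
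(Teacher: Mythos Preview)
Your part~(i) and the ``if'' direction of~(ii) match the paper's. For the ``only if'' direction of~(ii), your key lemma---that a bivariate Gaussian variance mixture with correlation in $(-\rho_0,\rho_0)$ fails NUOD, via the Jensen computation at $\rho=0$ plus continuity in $\rho$---is correct and is a quantitative alternative to the paper's route. The paper instead proves directly (by a symmetry trick: for a radially symmetric bivariate the four orthant inequalities sum identically to~$1$, forcing equalities and hence independence) that any non-Gaussian elliptical bivariate with diagonal dispersion cannot be NOD, and then handles $\sigma_{ij}>0$ via Yin's concordance ordering and $\sigma_{ij}\le 0$ by a weak limit as $n\to\infty$. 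Your fixed-$\rho_0$ lemma can replace both the external citation and the limit step, once you note that $\bar\Phi_\rho(u,u)\ge\bar\Phi(u)^2$ for $\rho\ge 0$ extends the failure window to $(-\rho_0,1]$.

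There are, however, two missteps in how you deploy the lemma. First, invoking Proposition~\ref{prop:non:exchangeable:to:exchangeable:nod} does not work: random permutation destroys membership in $\mathcal E(\psi)$, so the bivariate margins of $\mathbf X^\Pi$ are mixtures (over permutations) of ellipticals with \emph{different} correlations, not a single member of $\mathcal E_2(\psi)$ with correlation $-1/(n-1)$, and your key lemma no longer applies to them. Second, the heterogeneous case you flag as the main obstacle is in fact immediate, and the JM-identity-plus-\eqref{eq:maxcond2} manipulation is a red herring. The correct move---and exactly what the paper does---is to drop Proposition~\ref{prop:non:exchangeable:to:exchangeable:nod} entirely, note that NOD passes to bivariate margins (which remain in $\mathcal E_2(\psi)$), and use positive semidefiniteness of $\Sigma$ alone: taking $\mathbf a=(\sigma_{11}^{-1/2},\ldots,\sigma_{nn}^{-1/2})$ in $\mathbf a^\top\Sigma\mathbf a\ge 0$ gives $n+\sum_{i\ne j}\rho_{ij}\ge 0$, hence some pair has $\rho_{ij}\ge -1/(n-1)$, which for $n>1+1/\rho_0$ lands in your failure window. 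No JM identity, no~\eqref{eq:maxcond2}, no delicate balancing is needed.
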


Theorem~\ref{th:impossible} shows a clear contrast between NCD and other concepts of negative dependence.
As seen in the proof of Theorem~\ref{th:impossible}, NCD does not restrict the class of elliptical distributions since $\psi$ generates an $n$-dimensional elliptical distribution for every $n\in \N$ if and only if the corresponding elliptical class is a  Gaussian variance mixture family~\citep[Section~2.6]{FKN90}.
Note that a class of multivariate $t$ distributions (with a common degree of freedom) is an example of a Gaussian variance mixture family.
On the other hand, NOD, NSD and NA characterize Gaussian. This result stems from the fact that multivariate Gaussian distribution is the only one among elliptical distributions such that independence is equivalent to uncorrelatedness.

\section{Conclusion}\label{sec:conclusion}

The paper has focused on  the relationship between JM and   classic notions of negative dependence such as NOD and NA. 
Various connections between these concepts are obtained, and some  conditions for a joint mix to be negatively dependent are derived.
In particular, an exchangeable negatively dependent joint mix solves a multi-marginal optimal transport problem for quadratic cost under uncertainty on the participation of agents.



Negative dependence is always studied with many technical challenges. 
Although our main questions are addressed or partially addressed in this paper,  they give rise to many questions that remain open. We list a few of them that we find particularly interesting. 
\begin{enumerate}
\item Under what conditions, possibly stronger than exchangeability and NOD, is a joint mix  NA? 
An example of NOD joint mix that is not NA can be found in Section 3.2 of \cite{MR23} in the context of knockout tournaments with a nonrandom draw.
\item Under what general conditions, other than Gaussian, do we know a tuple of distributions supports an NA joint mix? For a fixed $n\ge 3$, this question is not clear even within the elliptical class.
\item It is unclear whether the decomposition result in Theorem \ref{th:r1-2} can be generalized to joint mixes that take infinitely many values or that are continuously distributed. 
\item Assuming homogeneous marginals, does an exchangeable joint mix solve problem \eqref{eq:mu_uncertainty1} for a general convex cost function $f$? 
In  Theorem \ref{prop:average:var}, we showed that this holds true for quadratic cost. 
We also know that a joint mix is an optimizer for the general convex cost problem without uncertainty. These observations seem to hint at the possible optimality of some exchangeable joint mix for general convex cost under uncertainty, but we do not have a proof. 
\item  Do negatively dependent joint mixes play an important role in optimization problems other than the ones considered in Section \ref{sec:otp}?
It is also unclear how 
 results in Section \ref{sec:otp} can be extended to heterogeneous marginal distributions with dimension higher than $3$. Two unsolved questions have already been mentioned in Remarks \ref{rem:conjecture} and \ref{rem:conj-2}.
\end{enumerate}
These questions yield new challenges to dependence theory and require future research.


\section*{Acknowledgements}
Takaaki Koike was supported by JSPS KAKENHI Grant Number JP21K13275. Ruodu Wang acknowledges financial support from the Natural Sciences and Engineering Research Council of Canada (RGPIN-2018-03823, RGPAS-2018-522590).



\appendix


\section{Proofs of all results}\label{sec:proofs}

Some notions of negative dependence introduced in Section~\ref{sec:notions:negative:dependence} are related to stochastic orders.
We first introduce some concepts of stochastic order. 

For two $n$-dimensional random vectors $\mathbf X$ and $\mathbf Y$, $\mathbf X$ is said to be less than $\mathbf Y$ in
 \emph{lower concordance order} (denoted by $\mathbf X\leq_{\rm cL}\mathbf Y$) if $\p(\mathbf X\leq \mathbf t)\leq \p(\mathbf Y \leq \mathbf t)$ for all $\mathbf t \in \R^n$,
 \emph{upper concordance order} (denoted by $\mathbf X\leq_{\rm cU}\mathbf Y$) if  $\p(\mathbf X > \mathbf t)\leq \p(\mathbf Y > \mathbf t)$ for all $\mathbf t\in \R^n$,
 \emph{concordance order} (denoted by $\mathbf X\leq_{\rm c}\mathbf Y$) if $\mathbf X\leq_{\rm cL}\mathbf Y$ and $\mathbf X\leq_{\rm cU}\mathbf Y$, and 
 in \emph{supermodular order} (denoted by $\mathbf X\leq_{\rm sm}\mathbf Y$) if $\mathbb E[\psi(\mathbf X)]\leq \mathbb E[\psi(\mathbf Y)]$ for all supermodular functions $\psi:\R^n \rightarrow \R$ such that the expectations exist.  
Using these notations of stochastic order, the notions of negative dependence NLOD, NUOD, NOD and NSD for an $n$-dimensional random vector $\mathbf{X}=(X_1,\dots,X_n)$ are denoted by $\mathbf{X}\le_{\rm cL}\mathbf{X}^\perp$, $\mathbf{X}\le_{\rm cU}\mathbf{X}^\perp$, $\mathbf{X}\le_{\rm c}\mathbf{X}^\perp$ and
$\mathbf X\leq_{\rm sm}\mathbf X^\perp$, respectively, where we recall that ${\mathbf X}^\perp=(X_1^\perp,\dots,X_n^\perp)$ is a random vector with independent components such that $X_i \laweq X_i^\perp$, $i\in [n]$.

\begin{proof}[Proof of Proposition~\ref{prop:r1-G-1}]
In part (a), the implication from (v) to (i) is shown by \cite{JP83}. 
The other implications follow from \eqref{eq:chain}.
Parts (b) and (c) can be easily checked by definition. 
Finally, part (d) follows from the fact that a CT random vector for $n\ge 3$ cannot have continuous marginal distributions~\citep{D72,PW15}.
\end{proof}

\begin{proof}[Proof of Theorem~\ref{th:r1-1}]
Note that to show NA, it suffices to show \eqref{eq:def-na} for $A,B$ that form a partition of $[n]$, as we can choose increasing functions in \eqref{eq:def-na} that only depend on a subset of $A$ and $B$. 
Let   $f$ and $g$ be two increasing functions on $\R^d$ and $\R^{n-d}$, respectively, where $d$ is the cardinality of $A$.
Note that
$$
\cov(f(\mathbf X_A),g(\mathbf X_B)) 
= \E[\cov ( f(\mathbf X_A)  , g(\mathbf X_B)|S_A )]  + \cov (\E[f(\mathbf X_A)|S_A ], \E[g(\mathbf X_B)|S_A ]);
$$
see (1.1) of \cite{JP83}.
Using conditional independence (a), we get 
$$
\cov(f(\mathbf X_A),g(\mathbf X_B)) 
=  \cov (\E[f(\mathbf X_A)|S_A ], \E[g(\mathbf X_B)|S_A ]).
$$
Since $S_A +S_B $ is a constant, condition (b) implies that 
$\E[f(\mathbf X_A)|S_A ]$ is an increasing function of $S_A$, and $\E[g(\mathbf X_B)|S_A ]=\E[g(\mathbf X_B)|S_B ]$ is a decreasing function of $S_A$. This shows that their covariance is non-positive. Therefore, $\mathbf X$ is NA.
\end{proof}

\begin{proof}[Proof of Proposition~\ref{prop:non:exchangeable:to:exchangeable:nod}]
Denote by $\mathbf X=(X_1,\dots,X_n)$ the NSD $n$-joint mix with joint distribution $F_{\mathbf X}$.
Let $\mathbf X^{\Pi}=(X_{\Pi(1)},\dots,X_{\Pi(n)})$ be an exchangeable joint mix, where $\Pi$ follows a uniform distribution on ${\mathfrak S_n}$ and is independent of $\mathbf X$.
Obviously $\mathbf X^{\Pi}$ is a joint mix, and has the same marginal distributions as $\mathbf X$.
Let $\bar F=\frac{1}{n!}\sum_{\pi \in {\mathfrak S_n}}F_{\mathbf X^\pi}$ be the distribution function of $\mathbf X^{\Pi}$.
Then $\bar F$ is exchangeable.
Moreover, $\bar F$ is NSD since
\begin{align*}
\E[\psi(\mathbf X^{\Pi})]=\frac{1}{n!}\sum_{\pi \in {\mathfrak S_n}}\E[\psi(\mathbf X^{\Pi})]\le
\frac{1}{n!}\sum_{\pi \in {\mathfrak S_n}}\E[\psi(\mathbf X^{\perp})]=\E[\psi(\mathbf X^{\perp})]
\end{align*}
for every supermodular function $\psi$ such that the expectations above exist.
Other cases of NOD, NUOD and NLOD are shown analogously.
\end{proof}

\begin{proof}[Proof of Proposition~\ref{prop:necessary}]
Without loss of generality, assume $\sigma_n^2$ is the maximum of $\{\sigma_1^2,\dots,\sigma_n^2\}$.
Note that NCD implies that the bivariate correlations are non-positive.
If $(X_1,\dots,X_n)$ is an NCD joint mix where $X_i\sim F_i$, $i\in [n]$, then
$$\sigma_n^2 = \var(X_n)= \var(X_1+\dots+X_{n-1}) \le  \sum_{i=1}^{n-1} \var(X_i) = \sum_{i=1}^{n-1} \sigma_i^2,$$
which yields \eqref{eq:maxcond} by adding $\sigma_n^2$ to both sides.
  \end{proof}

  \begin{proof}[Proof of Theorem~\ref{th:nodfull}]
The necessity follows from Proposition \ref{prop:necessary}, and below we show sufficiency.  Suppose that \eqref{eq:maxcond} holds.  Without loss of generality, we can assume $\sigma_n\ge \sigma_{n-1}\ge\cdots\ge\sigma_1$.
It suffices to consider $n\ge 3$ and $ \sigma_{n-1}>0$, and otherwise the problem is trivial. Moreover, the location parameters of the Gaussian distributions are not relevant, and they are assumed to be $0$. 

Let $\lambda$ be a constant such that
\begin{equation}\label{eq:th3lambda}
\lambda^2 \sum_{i=1}^{n-1} \sigma_i^2  + (1-\lambda^2)\sigma_{n-1}^2 = \sigma_n^2.
\end{equation}
By \eqref{eq:maxcond}, we have $   \sum_{i=1}^{n-1} \sigma_i^2 \ge  \sigma_n^2 \ge \sigma_{n-1}^2 $,
and this ensures that we can take $\lambda \in [0,1]$.

Let $P_n^{*}$  be a matrix with diagonal entries being $1$ and off-diagonal entries being $-1/(n-1)$,  and let $P_n^\perp$  be a matrix with diagonal entries being $1$ and off-diagonal entries being $0$. Take $\mathbf Y= (Y_1,\dots,Y_{n-1})\sim \mathrm{N}_{n-1}(\mathbf 0_{n-1}, P_{n-1}^\perp)$ and   $$\mathbf Z^{(m)}=(Z_m^{(m)},\dots,Z_{n}^{(m)})\sim \mathrm N_{n-m+1}(\mathbf 0_{n-m+1}, P_{n-m+1}^{*}), ~~~~~m =1,\dots,n-1,$$ such that $\mathbf Y,\mathbf Z^{(1)},\dots,\mathbf Z^{(n-1)}$ are independent.
 Note that $\mathbf Z^{(n-1)} =(Z_{n-1}^{(n-1)}, Z_{n}^{(n-1)}) \sim \mathrm N_{2}(\mathbf 0_{2}, P_{2}^{*})$ is $2$-dimensional,
 and each   $\mathbf Z^{(m)}$ is a joint mix.

 For notational simplicity, let the function $d $ be given by $d(a,b)=(a^2-b^2)^{1/2}$ for $a\ge b\ge 0.$
 Note that $a^2= d(a,b)^2 + b^2$.
 Moreover, for $k=1,\dots,n-1$, let $$\alpha_k =   d(\sigma_{k},\sigma_{k-1})  = \left(\sigma_{k}^2-\sigma_{k-1}^2\right)^{1/2},  $$
 with $\sigma_0=0$, and thus $\alpha_1=\sigma_1$.
For $k=1,\dots,n-1$, let
   $$
 X_k = \lambda \sigma_k Y_k+  d(1,\lambda)
 \sum_{j=1}^k  \alpha_j Z^{(j)}_k    .
 $$
Moreover, let
 $$
 X_n= -\lambda Y^*  +
 d(1,\lambda)
 \sum_{j=1}^{n-1}  \alpha_j Z^{(j)}_n ,\quad \mbox{where~}
 Y^* =   \sum_{k=1}^{n-1}   \sigma_k Y_k .
 $$
For $k=1,\dots,n-1$,
using independence among $Z^{(1)}_k,\dots,Z^{(k)}_k$, we get
 $$
 \var \left( \sum_{j=1}^k  \alpha_j Z^{(j)}_k\right)=
 \sum_{i=1}^k \alpha_j^2 =
 \sigma_1^2 + d(\sigma_2,\sigma_1)^2 + \dots + d(\sigma_{k},\sigma_{k-1})^2 = \sigma_k^2.
 $$
Hence,
 $ \sum_{j=1}^k  \alpha_j Z^{(j)}_k\sim \mathrm N_1(0,\sigma_k^2)$,
 and again using independence of $Y_k$ and  $\sum_{j=1}^k  \alpha_j Z^{(j)}_k$, we get $X_k\sim  \mathrm N_1(0,\sigma_k^2)$.
By \eqref{eq:th3lambda}, we have
  $$
  \var(X_n) =
 \var \left( \lambda \sum_{k=1}^{n-1}  \sigma_ k Y_k\right)+
 \var \left(d(1,\lambda)   \sum_{j=1}^{n-1}  \alpha_j Z^{(j)}_n\right)=\lambda^2  \sum_{i=1}^{n-1} \sigma_i^2
 +  (1-\lambda^2)  \sigma_{n-1}^2  =\sigma_n^2.
 $$
 Hence, $X_n\sim \mathrm N_1(0,\sigma_n^2)$.

 Next, we show that $(X_1,\dots,X_n)$ is a joint mix.
We can directly compute
\begin{align*}
\sum_{k=1}^{n} X_k &= \sum_{i=k}^{n-1} \lambda \sigma_k Y_k  +  d(1,\lambda)
\sum_{k=1}^{n-1}   \sum_{j=1}^k  \alpha_j Z^{(j)}_k
- \lambda  \sum_{k=1}^{n-1}   \sigma_k Y_k  +
 d(1,\lambda)
 \sum_{j=1}^{n-1}  \alpha_j Z^{(j)}_n
\\
&=  d(1,\lambda)
 \sum_{j=1}^{n-1}  \sum_{k=j}^{n}   \alpha_j Z^{(j)}_k
=0,
\end{align*}
where the last equality follows from the fact that
 $\mathbf Z^{(j)}$ is a joint mix  for each $j=1,\dots,n-1$.

 We check that $(X_1,\dots,X_n)$ is NA.
  This follows from the fact that $(X_1,\dots,X_n)$
  is the weighted sum of several independent
  NA random vectors
  $(\sigma_1Y_1,\dots,\sigma_{n-1} Y_{n-1}, -Y^*)$
  and $(\mathbf 0_{m-1},\mathbf Z^{(m)})$ for $m=1,\dots,n-1$.
 Alternatively, one can check that all non-zero terms in $\cov(X_i,X_j)$ are negative for $i\ne j$ 
 as follows.
 For $X_k, X_l$ with $k,l \le n-1$ and  $k < l$, we have 
\begin{align*}
\cov(X_k,X_l)&=\cov\left( \lambda \sigma_k Y_k+  d(1,\lambda)
 \sum_{j=1}^k  \alpha_j Z^{(j)}_k ,  \lambda \sigma_l Y_l+  d(1,\lambda)
 \sum_{i=1}^l  \alpha_i Z^{(i)}_l   \right)\\
 &=\lambda^2 \sigma_k  \sigma_l\cov(Y_k,Y_l)+ \lambda \sigma_k d(1,\lambda)
 \sum_{i=1}^l \alpha_i \cov(Z^{(i)}_l,Y_k) \\
 &~~~+\lambda \sigma_l d(1,\lambda)
 \sum_{j=1}^k  \alpha_j \cov(Z^{(j)}_k, Y_l)+d^2(1,\lambda)
 \sum_{j=1}^k \sum_{i=1}^l  \alpha_j\alpha_i\cov(Z^{(j)}_k,Z^{(i)}_l)\\
 &=-d^2(1,\lambda)
 \sum_{j=1}^k \frac{ \alpha^2_j}{n-j}\le0.
\end{align*}
For $X_k,X_n$ for all $k\le n-1$, we have
\begin{align*}
\cov(X_k,X_n)&=\cov\left( \lambda \sigma_k Y_k+  d(1,\lambda)
 \sum_{j=1}^k  \alpha_j Z^{(j)}_k ,  -\lambda \sum_{i=1}^{n-1}   \sigma_i Y_i  +
 d(1,\lambda)
 \sum_{i=1}^{n-1}  \alpha_i Z^{(i)}_n     \right)\\
 &=-\lambda^2 \sigma_k  \sum_{i=1}^{n-1} \sigma_i\cov(Y_k,Y_i)+ \lambda \sigma_k d(1,\lambda)
 \sum_{i=1}^{n-1} \alpha_i \cov(Z^{(i)}_n,Y_k) \\
 &~~~-\lambda  d(1,\lambda)
 \sum_{j=1}^k \sum_{i=1}^{n-1} \sigma_i\alpha_j \cov(Z^{(j)}_k, Y_i)+d^2(1,\lambda)
 \sum_{j=1}^k \sum_{i=1}^{n-1}  \alpha_j\alpha_i\cov(Z^{(j)}_k,Z^{(i)}_n)\\
 &=-\lambda^2\sigma_k^2-d^2(1,\lambda)
 \sum_{j=1}^k \frac{ \alpha^2_j}{n-j}\le0.
\end{align*}

Finally, the joint mix can be chosen as multivariate Gaussian by the construction of $(X_1,\dots,X_n)$ as the sum of Gaussian vectors.
\end{proof}

\begin{proof}[Proof of Theorem~\ref{th:r1-2}]
The ``if" statement is straightforward, and we will check the ``only if" statement.
Let $\XX=(X_1,\dots,X_n)$ be a joint mix and denote by $c=\sum_{i=1}^n X_i \in \R$. First, suppose that each component of $\XX$ is positive. 
 Denote by $V\subset \mathbb{R}$ the set of all possible values taken by random variables of the form $\sum_{i=1}^j X_i$ for $j=0,\dots,n$, with the convention that $\sum_{j=1}^{0} X_j=0$.
 Clearly, $V$ is finite. 
 The elements of $V$ are denoted by $v_0,v_1,\dots,v_K$ such that $v_0 < v_1<\dots<v_K$.
 Our assumptions imply that $v_0=0$, $v_1>0$ and $v_K=c$ because $\sum_{i=1}^n X_i=c$.
For $k \in [K]$ and $i\in [n]$, let
$$Y_{k,i}=   \id_{\{\sum_{j=1}^i X_j\ge v_k\}}-\id_{\{\sum_{j=1}^{i-1} X_j\ge v_k\}} $$
and let $\mathbf Y_k=(Y_{k,1},\dots,Y_{k,n})$.
Since each $X_j$ is positive, 
 the value of $ Y_{k,i}$ is either $0$ or $1$, and $$\sum_{i=1}^n Y_{k,i} = \id_{\{\sum_{j=1}^n X_j\ge v_k\}}-\id_{\{0\ge v_k\}}    = \id_{\{c\ge v_k\}}  =1 .$$ 
Therefore, $\mathbf Y_k$  follows a binary multinomial distribution for each $k \in [K]$.
Let $\mathbf X_k =(v_k- v_{k-1}) \mathbf Y_k$  for $k\in [K]$
with $v_0=0$.
Note that for $i\in [n]$,  
\begin{align*} 
\sum_{k=1}^K X_{k,i}
 & = \sum_{k=1}^K   (v_k-  v_{k-1})  \left ( \id_{\{\sum_{j=1}^i X_j\ge v_k\}}-\id_{\{\sum_{j=1}^{i-1} X_j\ge v_k\}} \right)
 = \sum_{j=1}^i X_j  - \sum_{j=1}^{i-1} X_j =
X_i,
\end{align*}
where we   used the identity 
$ \sum_{k=1}^K    (v_k-  v_{k-1})  \id_{\{x\ge v_k\}}   =x$ for $x\in V$. Therefore, $\sum_{k=1}^K  \mathbf X_k = \mathbf X$, showing   that $\mathbf X$ can be represented as a finite linear combination of binary multinomial random vectors.

If some components of $\mathbf X$ are not positive, we can take $m\in \R$ such that $X_i> m$ for each $i\in [n]$.
Applying the above result, we know that $(X_1-m,\dots,X_n-m)$ can be decomposed as the sum of JM joint mixes. 
Note that $\mathbf X=(X_1-m,\dots,X_n-m)+(m,\dots,m)$, and $(m,\dots,m)$ is $m$ times the sum of $n$ binary multinomial random vectors $(1,0,\dots,0), \dots, (0,\dots,0,1)$. Hence,
 $\mathbf X$  admits a finite linear combination of binary multinomial   random vectors.
\end{proof}

\begin{proof}[Proof of Proposition~\ref{prop:mixture}]
Let $G$ be the joint distribution of an exchangeable joint mix.
Let us write 
$$
\widetilde G (A)= \int_{\R^n} \delta_{\mathbf a}(A)  \d G(\mathbf a),~~~~A\in \mathcal B(\R^n),
$$  
where $\delta_{\mathbf a}$ is the point-mass at $\mathbf a$.
By exchangeability, we have $G( A^{\pi} ) = G(A)$ for $\pi\in {\mathfrak S_n}$ and $A\in \mathcal B(\R^n)$,
where $A^\pi$ is $\pi$ applied to elements of $A$. Therefore,
$$
G (A)= \int_{\R^n} \delta_{\mathbf a^\pi}(A) \d G(\mathbf a).
$$
Taking an average of the above formula over ${\mathfrak S_n}$, we have
$$
G (A)= \int_{\R^n}  U_{\mathbf a} (A) \d G(\mathbf a).
$$
It is known that each $U_{\mathbf a}$ is NA  \citep[][Theorem~2.11]{JP83}.
Moreover, the center of the joint mix distributed as $U_{\mathbf a}$ is $\mu$ since $G$ is supported on $\{(x_1,\dots,x_n)\in \R^n:x_1+\dots+x_n=\mu\}$.
\end{proof}

\begin{proof}[Proof of Proposition~\ref{prop:r1-1}]
As $\mathcal M$ is symmetric, we have $\sup_{\mu \in \mathcal M}C_{\mu}^f(\mathbf X)=\sup_{\mu \in \mathcal M}C_{\mu}^f(\mathbf X^{\pi})$ for all $\pi\in {\mathfrak S_n}$. Let $\Pi$ be   uniformly distributed on ${\mathfrak S_n}$ and  independent of $\mathbf X$. Plugging  $\mathbf X^\Pi$ in the objective \eqref{eq:mu}, we have
\begin{align*}
\sup_{\mu \in \mathcal M}C_{\mu}^f(\mathbf X^{\Pi})&=\sup_{\mu \in \mathcal M}\sum_{K\subseteq [n]} \E\left[f\left(\sum_{i\in K} X^{\Pi}_i\right)\right] \mu(K)\\
&=\sup_{\mu \in \mathcal M}\sum_{K\subseteq [n]}\frac{1}{n!} \sum_{\pi \in \mathfrak S_n}\E\left[f\left(\sum_{i\in K} X^{\pi}_i\right)\right] \mu(K)\\
&\le \frac{1}{n!}\sum_{\pi \in \mathfrak S_n}\sup_{\mu \in \mathcal M}\sum_{K\subseteq [n]}\E\left[f\left(\sum_{i\in K} X^{\pi}_i\right)\right] \mu(K)\\
&=\frac{1}{n!}\sum_{\pi \in \mathfrak S_n}\sup_{\mu \in \mathcal M}C_{\mu}^f(\mathbf X^{\pi})=\sup_{\mu \in \mathcal M}C_{\mu}^f(\mathbf X).
\end{align*}
Hence, $\sup_{\mu \in \mathcal M}C_{\mu}^f(\mathbf X^{\Pi})\le \sup_{\mu \in \mathcal M}C_{\mu}^f(\mathbf X)$. Furthermore, as $\mathbf X$ is a joint mix, we have  that $\mathbf X^\Pi$ is an exchangeable NCD joint mix with marginals $F$ and correlation matrix $P_n^*$.
\end{proof}

\begin{proof}[Proof of Theorem~\ref{prop:average:var}]

Without loss of generality, we assume that the variance of $F$ is 1. 
Using the same argument in the proof of Proposition \ref{prop:r1-1}, for any $\mathbf X$ with identical marginals $F$, we have 
$
\sup_{\mu \in \mathcal M}  C_{\mu}^2(\mathbf X^{\Pi}) 
\le 
\sup_{\mu \in \mathcal M}  C_{\mu}^2(\mathbf X)
$, where   $\Pi$ is  uniformly distributed on ${\mathfrak S_n}$.
 Let $\mathbf X_\rho$  be a random vector with  identical marginals $F$ and a 
correlation matrix whose 
off-diagonal entries are all $\rho$.
Since correlation matrices are positive semi-definite, 
 we have $\rho \in [-1/(n-1),1]$, with $\rho=-1/(n-1)$ attainable since $F$ is $n$-completely mixable. 
    The value of  $\sup_{\mu \in \mathcal M}  C_{\mu}^2(\mathbf X)$ only depends on the correlation matrix.  Therefore, it suffices to find an optimizer of the form $\mathbf X_\rho$ for some $\rho\in [-1/(n-1),1]$. 
Note that
\begin{align*}
 \sup_{\mu \in \mathcal M}C^2_{\mu}(\mathbf X_\rho)&=\sup_{\mu \in \mathcal M}\sum_{K\subseteq [n]} \left( \var\left(\sum_{i\in K} X_i\right)+\left(\E\left[\sum_{i\in K} X_i\right]\right)^2\right)\mu(K)\\
 &=\sup_{\mu \in \mathcal{M}} \sum_{k=1}^n \sum_{K \subseteq [n], \vert K\vert =k}\left(k+(k^2-k)\rho+k\E[X_1]\right)\mu(K).
 \end{align*}
It is clear that $\sup_{\mu \in \mathcal M}C^2_{\mu}(\mathbf X_\rho)$ increases in $\rho$. Therefore, the minimum is achieved at $\rho^*=-1/(n-1)$, which implies that $\mathbf X_{\rho^*}$ is an NCD joint mix with correlation matrix $P^*_n$. 
As the value of \eqref{eq:mu_uncertainty_quad} only depends on the correlation matrix, we have the desired result.
\end{proof}

\begin{proof}[Proof of Theorem~\ref{th:opt}]
The ``if" part is shown by  Theorem \ref{prop:average:var} by choosing $\mathcal M$ as both \eqref{eq:uncertainty_quad} and \eqref{eq:uncertainty2_quad} are special cases of \eqref{eq:mu_uncertainty_quad}. Next, we show the ``only if" part by showing that the correlation matrix of the minimizer to \eqref{eq:uncertainty_quad} or \eqref{eq:uncertainty2_quad} with any $k\in [n]\setminus\{1,n-1,n\}$  is $P_n^*$.

 Without loss of generality, we assume that the variance of $F$ is 1. As the mean of $F$ is zero, we have $\E[(\sum_{i\in K} X_i)^2]=\var(\sum_{i\in K} X_i)$.
By plugging an NCD joint mix $\mathbf X^{\rm E}$ with correlation matrix $P_n^*$ into  \eqref{eq:uncertainty_quad},   the optimal value for \eqref{eq:uncertainty_quad}   is 
  \begin{align} \max_{K\subseteq [n]}   \E\left[\left(\sum_{i\in K} X_i\right)^2\right]  = \max_{K\subseteq [n]}   \var\left(\sum_{i\in K} X_i^{\mathrm E}\right) =\max_{k \in [n]}\frac{k(n-k)}{n-1} = \frac{k^*(n-k^*)}{n-1},
 \label{eq:th2eq5p}
  \end{align}
  where $k^*=\lfloor n/2\rfloor$.

First, we consider the case $n=3$.
In this case, $ [n]\setminus \{1,n-1,n\}$ is empty,
  and we   only need to show that $P^*_n$ is the unique  correlation matrix of the minimizer  to \eqref{eq:uncertainty_quad}.
  Suppose that $\mathbf X$ with covariance matrix $\Sigma$ is a minimizer to \eqref{eq:uncertainty_quad}.
    By \eqref{eq:th2eq5p}, optimal value for \eqref{eq:uncertainty_quad} is $1$.
Hence,
$$
  \var\left(\sum_{i\in K} X_i\right)   \le 1  \mbox{  ~~~   for each $K$ with $|K|=2$,}
 $$
     and this implies
     \begin{equation}
     \sigma_{ij}\le -1/2,   \mbox{  ~~~   for $i\ne j$.}
\label{eq:th2eq5pp}
     \end{equation}
     Since $\Sigma$ is positive semi-definite,   we have $\sum_{i,j\in [3]} \sigma_{ij}\ge 0$, which leads to 
    $
    3+ 2\sigma_{12}+2\sigma_{13}+2\sigma_{23} \ge 0,
 $
     implying $ \sigma_{12}+ \sigma_{13}+ \sigma_{23} \ge -3/2.$
Together with
  \eqref{eq:th2eq5pp}, we get $ \sigma_{12} = \sigma_{13}= \sigma_{23} =-1/2$, and hence $\Sigma=P_n^{*}$.

Next, we consider the case $n\ge 4$.
 We first show that  the correlation matrix of the minimizer to \eqref{eq:uncertainty2_quad} is  unique for $1<k<n-1$.  Fix $k\in [n]\setminus \{1,n-1,n\}$.
  Suppose that $\mathbf X$ with covariance matrix $\Sigma$ is a minimizer to \eqref{eq:uncertainty2_quad}.
Our goal is to show $\Sigma=P_n^{*}$. 

Let $K_{\ell}$, $\ell=1,\dots, n_k$, be all subsets of $[n]$ with cardinality $k$, where $n_k= {n\choose k}$.  
Then we have  
\begin{equation}
 \frac 1 {n_k}  \sum_{\ell=1}^{n_k}  \var\left(\sum_{i\in K_\ell} X_i\right)   \ge k -\frac{k(k-1)}{ n-1} = \frac{k(n-k)}{n-1}.   \label{eq:th2eq3}   
\end{equation}
As $\mathbf X$ is a minimizer,  for each $K$ with $|K|=k$, we have
 \begin{equation}
\var\left(\sum_{i\in K} X_i\right)= \E\left[\left(\sum_{i\in K} X_i\right)^2\right] \le \max_{K\subseteq [n],~|K|=k}  \E\left[\left(\sum_{i\in K} X_i^{\rm E}\right)^2\right]   =  \frac{k(n-k)}{n-1}.
\label{eq:th2eq6}
     \end{equation}
Combining \eqref{eq:th2eq3}  and  \eqref{eq:th2eq6},
we have 
$$
 \var\left(\sum_{i\in K} X_i\right)    =  \frac{k(n-k)}{n-1} \mbox{  ~~~   for each $K$ with $|K|=k$.}
$$
Take $k=2$. For any $i,j \in [n]$ such that $i\neq j$, the above equation implies
$$\var(X_i+X_j)=\var(X_i)+\var(X_j)+2\cov(X_i,X_j)=\frac{2(n-2)}{n-1}.$$ As a result, we have $\cov(X_i,X_j)=-1/(n-1)$ for all $i,j \in [n]$ such that $i\neq j$. Hence, we conclude that $\Sigma =P_n^{*}$.

Finally, note that $k^*$ in  \eqref{eq:th2eq5p} satisfies $1<k^*<n-1$  for $n\ge 4$. We have justified that the correlation matrix for optimizers to \eqref{eq:uncertainty2_quad} with $k=k^*$
is unique. 
Therefore, by using \eqref{eq:th2eq5p},  the correlation matrix for optimizers to  \eqref{eq:uncertainty_quad} is also unique.

The above arguments show that, for $n \ge 3$, if $\mathbf X$ is a minimizer to \eqref{eq:uncertainty_quad}, then the correlation matrix of $\mathbf X$ is $P_n^*$, which implies that $\mathbf X$ is an NCD joint mix with correlation matrix $P_n^*$. The same conclusion holds true  if \eqref{eq:uncertainty_quad} is replaced by
  \eqref{eq:uncertainty2_quad} with any $k\in [n]\setminus\{1,n-1,n\}$.
\end{proof}

\begin{proof}[Proof of Proposition~\ref{prop:opt-n3}]
As the given marginal distributions have zero means, for any $(Y_1,\dots,Y_n)$ with variance vector $(\sigma_1^2,\dots,\sigma_n^2)$,
 $$
\max_{K\subseteq [n]}  \E\left[\left(\sum_{i\in K} Y_i\right)^2\right]=\max_{K\subseteq [n]}  \var\left(\sum_{i\in K} Y_i\right)
\ge \max_{i \in  [n]}  \var (Y_i)  = \max_{i\in [n]} \sigma_i^2.
 $$
In case $n=3$, a joint mix $\mathbf X$ with variance vector $(\sigma_1^2,\dots,\sigma_n^2)$ satisfies
 $$
\max_{K\subseteq [3],~|K|=1}  \var\left(\sum_{i\in K} X_i\right)   = \max_{K\subseteq [3],~|K|=2}  \var\left(\sum_{i\in K} X_i\right)
  = \max_{i \in  [3]}  \var (X_i) = \max_{i\in [3]} \sigma_i^2
,$$
and $\var(X_1+X_2+X_3)=0$.
Hence, the joint mix minimizes \eqref{eq:uncertainty_quad}.

To show that no positive covariance is allowed,
suppose that $(X_1,X_2,X_3)$ is a minimizer to  \eqref{eq:uncertainty_quad} and  $\cov(X_i,X_j)>0$ for some $i\ne j$. We  have
\begin{equation}\label{eq:no-pos}
\max_{K\subseteq [n]}  \E\left[\left(\sum_{i\in K} X_i\right)^2\right]
\ge  \E\left[\left(X_i+X_j\right)^2\right] = \var (X_i+X_j)  >\sigma_i^2+\sigma_j^2 \ge \max\left (\sigma_1^2,\sigma_2^2,\sigma_3^2\right),
\end{equation}
 where the last inequality follows from the necessary condition \eqref{eq:maxcond} of the existence of an NCD joint mix.
 Since we have seen that the optimal value of \eqref{eq:uncertainty_quad} is $\max_{i\in [3]} \sigma_i^2$,
\eqref{eq:no-pos} implies that $(X_1,X_2,X_3)$ does not minimize \eqref{eq:uncertainty_quad}.
\end{proof}

\begin{proof}[Proof of Lemma~\ref{lem:simple}]
One of the key properties of elliptical distributions is that they are closed under linear transformations, which is clear from \eqref{eq:ellip}.
Hence, for
$\mathbf X \sim \operatorname{E}_n(\boldsymbol{\mu}, \Sigma,\psi)$, the random variable
$\sum_{i=1}^n X_i$ follows $\operatorname{E}_n(\mathbf{1}_n^\top \boldsymbol{\mu}, \mathbf{1}_n^\top\Sigma\mathbf{1}_n,\psi)$, which is degenerate if and only if $\mathbf{1}_n^\top\Sigma\mathbf{1}_n=0$.
 Hence, $\mathbf X  $ is a joint mix if and only if $\mathbf{1}_n^\top\Sigma\mathbf{1}_n=0$ or $\psi =1$ on $\R_+$.
\end{proof}

\begin{proof}[Proof of Proposition~\ref{prop:R1-el}]
Necessity follows from Proposition \ref{prop:necessary}.
To show sufficiency, let $\mathbf{X}\sim \operatorname{E}_n(\boldsymbol{\mu},\Sigma,\psi)$, where $\Sigma$ is the dispersion matrix of the multivariate Gaussian distribution constructed in the proof of Theorem~\ref{th:nodfull}.
Then $X_i \sim  \operatorname{E}_1(\mu_i,\sigma_i,\psi)$, $i\in[n]$.
Moreover, it is checked in the proof of Theorem~\ref{th:nodfull} that $\mathbf{1}_n^\top\Sigma \mathbf{1}_n=0$ and $\sigma_{ij}\le 0$ for $i,j\in[n]$ such that $i\neq j$.
Therefore, $\mathbf{X}$ is the desired NCD joint mix.
\end{proof}

\begin{proof}[Proof of Theorem~\ref{th:impossible}]
The statement (\ref{item:ncd}) immediately follows from the facts that $\psi$ generates an $n$-dimensional elliptical distribution for every $n\in \N$ if and only if the corresponding elliptical class is a  Gaussian variance mixture family~\citep[Section~2.6]{FKN90}, and that $\operatorname{E}_n(\mathbf{0}_n,P_n^\ast,\psi)$ is an NCD joint mix, where $P_n^{*}$ is an $n\times n$ matrix whose diagonal entries are $1$ and off-diagonal entries are $-1/(n-1)$.

To show (\ref{item:nod}), we need two lemmas.

\begin{lemma}[Corollary~4 of~\cite{Y21}]\label{lem:elliptical:nod}
Let $\mathbf X \sim \operatorname{E}_n(\boldsymbol{\mu},\Sigma,\psi)$ and $\mathbf Y \sim \operatorname{E}_n({\boldsymbol{\mu}},\Sigma',\psi)$ be two elliptical distributions such that $\Sigma=(\sigma_{ij})$ and $\Sigma'=(\sigma_{ij}')$ satisfy $\sigma_{ii}=\sigma'_{ii}$ for all $i\in [n]$.
Then $\mathbf X \leq_{\rm cU}\mathbf Y$ if and only if 
$\sigma_{ij}\leq \sigma'_{ij}$ for all $i\neq j$.
\end{lemma}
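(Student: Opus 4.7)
The plan is to prove necessity by passing to bivariate marginals and exploiting a monotonicity of bivariate elliptical survival functions in the off-diagonal dispersion parameter, and to prove sufficiency through a smooth interpolation between $\Sigma$ and $\Sigma'$ combined with a Plackett-type identity for elliptical densities.

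For necessity, fix $i\ne j$. Upper concordance order is preserved under marginalization (let the remaining coordinates of $\mathbf t$ tend to $-\infty$ in the defining inequality), so $\mathbf X\leq_{\rm cU}\mathbf Y$ implies $(X_i,X_j)\leq_{\rm cU}(Y_i,Y_j)$. Both bivariate vectors are elliptical with generator $\psi$ and matching diagonal dispersion entries, hence identical univariate marginals. When second moments exist, Hoeffding's identity
\[
\cov(X_i,X_j)=\int_{\R^2}\bigl(\p(X_i>s,X_j>t)-\p(X_i>s)\p(X_j>t)\bigr)\d s\,\d t
\]
immediately gives $\sigma_{ij}\le\sigma'_{ij}$, since the covariance matrix is a positive multiple of the dispersion matrix for elliptical distributions. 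For elliptical families without second moments, the same conclusion follows from strict monotonicity of $\rho\mapsto\p(X_i>s,X_j>t)$ at some $(s,t)$, where $\rho=\sigma_{ij}/\sqrt{\sigma_{ii}\sigma_{jj}}$; this monotonicity is the bivariate content of the Plackett identity described next.

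For sufficiency, I would use the convex interpolation $\Sigma(\lambda)=(1-\lambda)\Sigma+\lambda\Sigma'$, $\lambda\in[0,1]$, which remains positive semi-definite with diagonal entries $\sigma_{ii}$ unchanged, so that $\mathbf X(\lambda)\sim\operatorname{E}_n(\boldsymbol\mu,\Sigma(\lambda),\psi)$ is well-defined and has the prescribed marginals for each $\lambda$. Writing $h(\lambda)=\p(\mathbf X(\lambda)>\mathbf t)$, the target $h(0)\le h(1)$ reduces to showing $h'(\lambda)\ge 0$ on $(0,1)$. Assuming $\Sigma(\lambda)$ is invertible and $\psi$ is smooth enough that the density $f_\lambda$ of $\mathbf X(\lambda)$ is $C^2$, a Plackett-type identity
\[
\frac{\partial f_\lambda(\mathbf x)}{\partial\sigma_{ij}(\lambda)}=\frac{\partial^2 f_\lambda(\mathbf x)}{\partial x_i\,\partial x_j}\qquad(i\ne j)
\]
holds; it is obtained by differentiating the characteristic function in \eqref{eq:ellip} with respect to $\sigma_{ij}$ and Fourier-inverting, exactly as in the Gaussian case. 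Integrating over $\{\mathbf x>\mathbf t\}$ and performing two integrations by parts in $x_i$ and $x_j$ yields
\[
h'(\lambda)=\sum_{i<j}(\sigma'_{ij}-\sigma_{ij})\int_{\mathbf x_{-\{i,j\}}>\mathbf t_{-\{i,j\}}}f_\lambda\bigl(\mathbf x_{-\{i,j\}},\,t_i,t_j\bigr)\d\mathbf x_{-\{i,j\}}\ge 0,
\]
since each integral is an unsigned $(n-2)$-dimensional orthant probability and each coefficient $\sigma'_{ij}-\sigma_{ij}$ is nonnegative by hypothesis.

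The main obstacle is technical: justifying the Plackett identity and the integration by parts when $\Sigma(\lambda)$ is singular (so $\mathbf X(\lambda)$ has no Lebesgue density) or when $\psi$ lacks smoothness. I would handle both by a regularization argument: convolve $\mathbf X(\lambda)$ with an independent $\epsilon\mathbf Z\sim\operatorname{E}_n(\mathbf 0_n,I_n,\tilde\psi)$ for a smooth generator $\tilde\psi$, apply the interpolation argument to the perturbed vectors (which are nondegenerate and admit smooth densities), and let $\epsilon\downarrow 0$, invoking weak convergence together with continuity of the survival function at continuity points of the limit. When $\mathcal E(\psi)$ is a Gaussian variance mixture (the setting in which this lemma is applied in Theorem~\ref{th:impossible}), a cleaner shortcut is available: condition on the mixing variable $W$, apply the classical Slepian inequality to the Gaussian conditionals with covariances $W\Sigma$ and $W\Sigma'$, and average over $W$, avoiding any smoothness assumption on $\psi$.
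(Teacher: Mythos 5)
The paper does not actually prove this lemma: it is imported verbatim as Corollary~4 of \cite{Y21}, so there is no in-paper argument to match yours against. Your overall architecture (marginalize to bivariate and use Hoeffding's identity for necessity; interpolate $\Sigma(\lambda)$ and integrate a Plackett-type identity by parts for sufficiency) is indeed the classical route to Slepian-type inequalities for elliptically contoured laws, and the Gaussian-variance-mixture shortcut via conditioning on $W$ and applying Slepian is correct as far as it goes. The necessity half is essentially fine when second moments exist, since $\cov(\mathbf X)=-2\psi'(0)\Sigma$ with the same positive constant $-2\psi'(0)$ for both vectors.

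The genuine gap is the identity $\partial f_\lambda/\partial\sigma_{ij}=\partial^2 f_\lambda/\partial x_i\partial x_j$, which you claim follows by differentiating \eqref{eq:ellip} and Fourier-inverting ``exactly as in the Gaussian case.'' Carrying out that computation shows the opposite: the Fourier transform of $\partial f/\partial\sigma_{ij}$ is $2t_it_j\,\psi'(\mathbf t^\top\Sigma\mathbf t)e^{\mathrm i\mathbf t^\top\boldsymbol\mu}$, while that of $\partial^2 f/\partial x_i\partial x_j$ is $-t_it_j\,\psi(\mathbf t^\top\Sigma\mathbf t)e^{\mathrm i\mathbf t^\top\boldsymbol\mu}$; these agree if and only if $2\psi'=-\psi$, i.e.\ $\psi(u)=e^{-u/2}$. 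So the identity as written fails for every non-Gaussian generator, and the displayed formula for $h'(\lambda)$ with $f_\lambda$ in the integrand is wrong. The argument can be repaired --- one must take the mixed partial of an auxiliary function $h_\lambda$ whose characteristic function is $-2\psi'(\mathbf t^\top\Sigma(\lambda)\mathbf t)$, and then separately prove that $h_\lambda\ge 0$, which requires the Schoenberg-type fact that $-\psi'$ is again (up to normalization) a valid characteristic generator --- but that positivity step is a nontrivial missing ingredient, not a routine Fourier inversion. Finally, the Slepian shortcut does not rescue the full statement: for a fixed $n$, a generator $\psi$ of an $n$-dimensional elliptical law need not be a Gaussian variance mixture (only generators valid for all dimensions are, as the paper itself notes), and Theorem~\ref{th:impossible} applies the lemma to arbitrary $\psi$ at each fixed $n$, so the general case cannot be reduced to conditioning on a mixing variable.
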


Let $P_n^{\perp}$ be the identity matrix, which is the correlation matrix of an independent random vector.
Although Lemma~\ref{lem:elliptical:nod} implies that $\operatorname{E}_n({\boldsymbol{\mu}},P_n^{*},\psi)\leq_{\rm c}\operatorname{E}_n(\boldsymbol{\mu},P_n^{\perp},\psi)$ for general elliptical distributions, $\operatorname{E}_n({\boldsymbol{\mu}},P_n^{*},\psi) $ is not necessarily NOD in general since  $\operatorname{E}_n(\mathbf{0}_n,P_n^{\perp},\psi)$ does not have independent components.
In fact, an elliptical distribution $\operatorname{E}_n(\mathbf{0}_n,P_n^{\perp},\psi)$ is not NOD unless it is  Gaussian.

\begin{lemma}\label{lem:elliptical:zero:correlation}
 The  elliptical distribution $\operatorname{E}_n(\boldsymbol \mu,\Sigma,\psi)$ where $\Sigma$ is diagonal
 is not NOD unless it is Gaussian.
\end{lemma}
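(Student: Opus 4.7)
The plan is to reduce to the spherical case and then argue that NOD forces each pair of components to be independent, at which point Kelker's characterization of the Gaussian among spherical distributions finishes the proof. Negative orthant dependence and the Gaussian property are both invariant under component-wise affine transformations, so after applying $\mathbf X \mapsto D^{-1/2}(\mathbf X - \boldsymbol \mu)$ with $D = \operatorname{diag}(\sigma_{11},\dots,\sigma_{nn})$ I may assume $\boldsymbol \mu = \mathbf 0$ and $\Sigma = I_n$, making $\mathbf X \sim \operatorname{E}_n(\mathbf 0, I_n, \psi)$ spherical. Setting the remaining thresholds to $-\infty$ in the NUOD inequality, NOD implies that every two-dimensional marginal $(X_i, X_j)$ is NQD, that is, $\p(X_i > s, X_j > t) \le \p(X_i > s)\p(X_j > t)$ for all $s, t \in \R$.

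Next I would exploit the symmetry of spherical distributions to turn NQD into independence. By the invariance of $(X_i, X_j) \sim \operatorname{E}_2(\mathbf 0, I_2, \psi)$ under sign flips of each coordinate, the bounded truncations $X_i^{(c)} := (X_i \wedge c)\vee(-c)$ satisfy $\E[X_i^{(c)}] = 0$ and $\E[X_i^{(c)} X_j^{(c)}] = -\E[X_i^{(c)} X_j^{(c)}] = 0$, so $\cov(X_i^{(c)}, X_j^{(c)}) = 0$. Because truncation is a pointwise monotone operation, NQD is inherited by $(X_i^{(c)}, X_j^{(c)})$. Hoeffding's identity then reads
\[
0 = \cov(X_i^{(c)}, X_j^{(c)}) = \int_\R \int_\R \bigl[\p(X_i^{(c)} > s, X_j^{(c)} > t) - \p(X_i^{(c)} > s)\p(X_j^{(c)} > t)\bigr]\, ds\, dt,
\]
and since the integrand is non-positive almost everywhere and integrates to zero, it vanishes almost everywhere; right-continuity of survival functions upgrades this to vanishing everywhere, so $(X_i^{(c)}, X_j^{(c)})$ are independent. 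Letting $c \to \infty$ and invoking weak convergence gives the independence of $(X_i, X_j)$.

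Finally, every two-dimensional marginal $(X_i, X_j) \sim \operatorname{E}_2(\mathbf 0, I_2, \psi)$ has independent components, which by Kelker's theorem (see \citet[Section~2.6]{FKN90}) forces $\psi$ to be the Gaussian characteristic generator $\psi(u) = \exp(-u/2)$; equivalently, the 1D marginals are Gaussian, so $\mathbf X$ itself is Gaussian.

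The main obstacle is the handling of possibly infinite second moments, since without them Hoeffding's identity is not directly available and $\cov(X_i, X_j)$ is ill-defined; the truncation argument above circumvents this cleanly. Apart from that, the proof is essentially a packaging of the Gaussian-characterization of spherical distributions with independent components (Kelker/Maxwell), combined with the elementary observation that NQD together with uncorrelatedness already implies independence.
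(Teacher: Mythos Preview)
Your argument is correct, but it takes a somewhat different path to the key step than the paper does. Both proofs reduce to showing that NOD of the bivariate marginal $(X_i,X_j)\sim\operatorname{E}_2(\mathbf 0,I_2,\psi)$ forces $X_i$ and $X_j$ to be independent, and both then invoke the classical fact (Kelker/Maxwell, \citet[Theorem~4.11]{FKN90}) that a spherical distribution with independent components must be Gaussian. Where they differ is in how independence is extracted. You use the sign-flip symmetry to get zero covariance of truncations, then Hoeffding's identity together with NQD to conclude that the integrand vanishes identically, hence independence; the truncation device is what lets you avoid any second-moment assumption. The paper instead observes that the sign-flip symmetry turns the single NLOD inequality $\p(X_1\le x_1,X_2\le x_2)\le \p(X_1\le x_1)\p(X_2\le x_2)$ into all four quadrant inequalities (replacing $\le$ by $\ge$ in each coordinate), and adding these four yields $1\le 1$, so each must be an equality---which is precisely independence. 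This four-inequality trick is shorter and sidesteps moments, Hoeffding, and truncation entirely; your route is a bit more machinery-heavy but equally valid, and it makes the role of uncorrelatedness explicit.
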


\begin{proof}
Assume that $\mathbf X \sim \operatorname{E}_n(\boldsymbol \mu,\Sigma,\psi)$ is NOD.
Since NOD is location invariant, it suffices to show the case when $\boldsymbol \mu=\mathbf{0}_n$.
When $\mathbf X$ is NOD, then so is $(X_1,X_2)$, that is,
$$
\p(X_1\leq x_1,X_2\leq x_2)\le \p(X_1\leq x_1)\p(X_2\leq x_2) \quad \text{for all }(x_1,x_2)\in \R^2.
$$
Since $(X_1,X_2)$ and $(-X_1,X_2)$ are identically distributed, we have
$$
\p(X_1\ge x_1,X_2\leq x_2)\le \p(X_1\ge x_1)\p(X_2\leq x_2) \quad \text{for all }(x_1,x_2)\in \R^2,
$$
and similarly, by symmetry,
$$
\p(X_1\ge x_1,X_2\ge x_2)\le \p(X_1\ge x_1)\p(X_2\ge  x_2) \quad \text{for all }(x_1,x_2)\in \R^2,
$$
$$
\p(X_1\le  x_1,X_2\ge  x_2)\le \p(X_1\le x_1)\p(X_2\ge x_2) \quad \text{for all }(x_1,x_2)\in \R^2.
$$
Adding the above four inequalities together, we get $1\le 1$. Hence, each of them is an equality.  However, $(X_1,X_2)$ follows a bivariate elliptical distribution with generator $\psi$, and thus $X_1$ and $X_2$ are not independent unless it is Gaussian;~see Theorem~4.11 of~\citet{FKN90}.
 Therefore, $\mathbf X$ cannot be NOD unless it is Gaussian. 
\end{proof}

Now we are ready to prove Theorem~\ref{th:impossible}.
The ``if" statement follows
  from Proposition \ref{prop:r1-G-1}.
  It remains to show the ``only if" statement.
Let $\psi$ be a characteristic generator different from that of the Gaussian distribution.
  For $n\ge 2 $, let $\mathbf X \sim  \operatorname{E}_n(\boldsymbol \mu, \Sigma,\psi)$ be an NOD joint mix
  where $\Sigma$ has positive diagonal entries.
  We start by observing from Lemma \ref{lem:elliptical:nod} that if $\sigma_{ij}>0$ for $i\ne j$, then the bivariate projection $(X_i,X_j) $ of $\mathbf X$ satisfies $(X_i,X_j)\ge_{\rm c} (X_i',X_j')$ where
$ (X_i',X_j')\sim \operatorname{E}_n(\boldsymbol \mu,\Sigma_{ij}',\psi)$
with $$\Sigma_{ij}'  =\begin{pmatrix} \sigma_{ii}&0 \\0&\sigma_{jj}\end{pmatrix}.$$
Using Lemma \ref{lem:elliptical:zero:correlation}, we know that $ (X_i',X_j')$ is not NOD, that is,
there exists $(x_i,x_j)\in \R^2$ such that
\begin{align}\label{eq:ellip:not:nod:1}
\p(X'_i\leq x_i,X'_j\leq x_j)> \p(X'_i\leq x_i)\p(X'_j\leq x_j);
\end{align}
note that it suffices to consider the inequality needed for NLOD (not NUOD) by symmetry of the elliptical distribution and location invariance of NOD.
Therefore, we have that
\begin{align}\label{eq:ellip:not:nod:2}
\p(X_i\leq x_i,X_j\leq x_j)> \p(X'_i\leq x_i,X'_j\leq x_j).
\end{align}
The two inequalities~\eqref{eq:ellip:not:nod:1} and~\eqref{eq:ellip:not:nod:2} imply that
$$
\p(X_i\leq x_i,X_j\leq x_j)> \p(X_i\leq x_i)\p(X_j\leq x_j),
$$
that is, $(X_i,X_j)$ is not NOD.
This leads to a contradiction.

Next, we assume $\sigma_{ij}\le 0$ for all $i\ne j$.
Since $\mathbf a^\top \Sigma \mathbf a\ge 0$ for all $\mathbf a\in \R^n$ and $\Sigma$ has positive diagonal entries, we can take $\mathbf a=(1/\sqrt{\sigma_{11}},\dots,1/\sqrt{\sigma_{nn}})$,
and this yields
$$
\sum_{i,j=1}^n \frac{\sigma_{ij}}{ \sqrt{\sigma_{ii} \sigma_{jj}} } = n+ \sum_{i\ne j} \frac{\sigma_{ij}}{ \sqrt{\sigma_{ii} \sigma_{jj}} }   \ge 0.
$$
Hence,   there exist $i,j$ with $i\ne j$ such that
 $$\rho_{ij}:=\frac{\sigma_{ij}}{ \sqrt{\sigma_{ii} \sigma_{jj}} } \ge -\frac{1}{n-1}.$$
Since NOD is location-scale invariant, the NOD of $(X_i,X_j)$ implies that $\mathrm E_2( \mathbf{0}_2, P_{ij}, \psi)$
is NOD, where
$$
P_{ij}  =\begin{pmatrix}1&\rho_{ij} \\\rho_{ij}&1\end{pmatrix}.
$$
Taking a limit as $n\to \infty$, 
and noting that NOD is closed under weak convergence \citep{MS02},
we conclude that
 $\operatorname{E}_2({\mathbf 0}_2,P_{2}^\perp,\psi)$ is also NOD,
 which contradicts Lemma~\ref{lem:elliptical:zero:correlation} if $\mathcal{E}(\psi)$ is not Gaussian.

Finally, by Proposition~\ref{prop:r1-G-1}, this characterization result follows if NOD is replaced by NSD or NA.
\end{proof}

\end{document}